\DeclareMathOperator{\tr}{tr}
\DeclareMathOperator{\supp}{supp}
\newcommand\sym{\mathrm{sym}}
\newcommand\abs[1]{\left\lvert{#1}\right\rvert}
\newtheorem{theorem}{Theorem}[section]
\newtheorem{lemma}[theorem]{Lemma}
\newtheorem{corollary}[theorem]{Corollary}
\newtheorem{proposition}[theorem]{Proposition}
\theoremstyle{definition}
\newtheorem{definition}[theorem]{Definition}
\newtheorem{notation}[theorem]{Notation}
\newtheorem{remark}[theorem]{Remark}
\title[Finite Element Spaces of Polynomial Double 2-Forms]{Finite Element Spaces of Double Two-Forms With Polynomial Coefficients}
\author{Yakov Berchenko-Kogan and Lily DiPaulo}
\begin{document}

\begin{abstract}
We develop finite element spaces of symmetric tensor products of two-forms with polynomial coefficients. In three dimensions, these give higher order finite element spaces of matrix fields with normal--normal continuity, which have applications to the TDNNS method for elasticity, for example. In general dimension, these spaces can be used to represent the Riemann curvature tensor in numerical relativity. In many ways, our methods parallel Li's work generalizing Regge calculus to higher order, as Regge elements can be thought of as symmetric tensor products of one-forms. However, whereas the constant coefficient Regge space has one shape function per edge, the constant coefficient space of double-forms in our paper has one shape function per triangle and two shape functions per tetrahedron, so we must address the fact that there are shape functions of two different types. Like Li, we obtain an explicit geometrically decomposed basis of shape functions.
\end{abstract}

\maketitle

\section{Introduction}

Motivated by problems in elasticity and in numerical geometry and relativity, there has been recent interest \cite{bega25double, hu2025finite} in developing finite element spaces for tensor products of differential forms, which are known as double forms or form-valued forms. We denote these spaces $\Lambda^{p,q}:=\Lambda^p\otimes\Lambda^q$ and refer to them as $(p,q)$-forms. One can think of these double forms as $(p+q)$-tensors that are antisymmetric in the first $p$ indices and antisymmetric in the last $q$ indices.

A well-studied special case is $p=q=1$, in which case $(1,1)$-forms are simply bilinear forms, which can also be thought of as matrix fields with tangential--tangential interelement continuity. Finite element spaces for symmetric bilinear forms with constant coefficients on simplicial triangulations were developed by Christiansen \cite{christiansen2004characterization, christiansen2011linearization} based on the discrete formulation of Regge \cite{re61}. These spaces were generalized to higher order polynomial coefficients by Li \cite{li2018regge} and are now known as \emph{Regge elements}. (Note that \emph{antisymmetric} bilinear forms are the same as $2$-forms, so their finite element spaces fall under the purview of finite element exterior calculus \cite{Arnold_Falk_Winther_2006}.)

In this paper, we study finite element spaces of symmetric $(2,2)$-forms. In three dimensions, $(2,2)$-forms are equivalent to matrix fields with normal--normal interelement continuity, which are used in Pechstein's and Sch{\"o}berl's Tangential Displacement, Normal--Normal Stress (TDNNS) method for elasticity \cite{pechstein2011tangential}. Meanwhile, in geometry, the Riemann curvature tensor is a symmetric $(2,2)$-form, so there is a need for these spaces in numerical geometry \cite{gopalakrishnan2023analysis}. With constant coefficients, finite element spaces for symmetric $(2,2)$-forms were developed in three dimensions in \cite{pechstein2011tangential} (as symmetric matrix fields), and in general dimension in \cite{bega25double, hu2025finite}.

In this paper, we develop finite element spaces for $(2,2)$-forms with higher order polynomial coefficients. Such higher order spaces were already considered in both \cite{bega25double} and \cite{hu2025finite}, so our main contribution is to give explicit geometrically decomposed computational bases for these spaces, analogous to Li's natural bases for higher order symmetric $(1,1)$-forms (bilinear forms) \cite{li2018regge}. We note, however, that our construction is self-contained and does not rely on the machinery in either \cite{bega25double} or \cite{hu2025finite}. Additionally, we expect these methods to generalize to higher order spaces of $(p,q)$-forms more generally, using the constant-coefficient spaces of \cite{bega25double} as a starting point.

In Section~\ref{sec:l22}, we set notation and discuss one key difference between $(2,2)$-forms and $(1,1)$-forms. In general, $(p+q)$-forms $\Lambda^{p+q}$ embed into the space of $(p,q)$-forms $\Lambda^{p,q}$, since $(p+q)$-tensors that are antisymmetric in all indices are, in particular, antisymmetric in the first $p$ indices and in the last $q$ indices. As discussed in \cite{bega25double}, the $\Lambda^{p+q}$ subspace of $\Lambda^{p,q}$ is somewhat exceptional and is best treated separately using finite element exterior calculus \cite{Arnold_Falk_Winther_2006}. In our case of interest, we see that $4$-forms are \emph{symmetric} $(2,2)$-forms, whereas $2$-forms are \emph{antisymmetric} $(1,1)$-forms. As a result, we do not need to worry about this exceptional case when studying symmetric $(1,1)$-forms, but we do need to consider it in the study of symmetric $(2,2)$-forms. We find that the space of symmetric $(2,2)$-forms naturally decomposes as $\Lambda^{2,2}_\sym=\Lambda^{2,2}_0\oplus\Lambda^4$, where $\Lambda^{2,2}_0$ consists of those $(2,2)$-forms that satisfy the algebraic Bianchi identity. Because finite element exterior calculus gives finite element spaces for $\Lambda^4$, the task of this paper is to develop finite element spaces for $\Lambda^{2,2}_0$. Additionally, note that, with regards to the aforementioned applications, the Bianchi identity always holds in dimension three, and the Bianchi identity holds for the Riemann curvature tensor.

In Li's work on symmetric bilinear forms \cite{li2018regge}, Li leverages the constant coefficient spaces in order to construct spaces with higher order polynomial coefficients. We adopt the same approach in this paper, and so we begin by discussing the constant coefficient space on a simplex in Section~\ref{sec:simplex}. There is, however, a key difference: For symmetric bilinear forms with constant coefficients, there is just one type of shape function, associated to edges of the triangulation. In contrast, for the space $\Lambda^{2,2}_0$, there are two types of shape function, with one type associated to triangles and the other type associated to tetrahedra. 

Finally, in Section~\ref{sec:poly}, we develop a finite element space for $\mathcal P_r\Lambda^{2,2}_0$ of forms in $\Lambda^{2,2}_0$ with polynomial coefficients. As we will see, the fact that the constant coefficient space has shape functions associated to faces of different dimensions complicates matters. Nonetheless, analogous to Li's result for bilinear forms \cite{li2018regge}, we obtain a geometrically decomposed basis of shape functions for $\mathcal P_r\Lambda^{2,2}_0$.

\section{Decomposition of $\Lambda^{2,2}_{\sym}$}\label{sec:l22}

In this section, we establish definitions and notation, and we show that the space $\Lambda^{2,2}_{\sym}$ of symmetric $(2,2)$-forms decomposes as a sum of the space $\Lambda^{2,2}_0$ of $(2,2)$-forms that satisfy the Bianchi identity and the space $\Lambda^4$ of fully antisymmetric $4$-tensors. For decompositions of double form spaces in greater generality, see \cite[Section~2]{bega25double}, especially Section~2.4.4 for its application to $(2,2)$-forms.

\begin{definition}
For $k\ge 0$, let $\Lambda^k(\mathbb R^n)$ denote the space of constant coefficient differential $k$-forms on $\mathbb R^n$, that is, alternating $k$-linear forms on $\mathbb R^n$ with constant coefficients. This space is spanned by wedge products of the form $\alpha_{1} \wedge \alpha_{2} \wedge \cdots\wedge\alpha_k$, where each $\alpha_i\in\Lambda^1(\mathbb R^n)$.

Let $\mathcal P_r\Lambda^k(\mathbb R^n)$ denote the space of differential $k$-forms with polynomial coefficients of degree at most $r$. Likewise, let $C^\infty\Lambda^k(\mathbb R^n)$ denote those with smooth coefficients. When there is no danger of ambiguity, we will omit the $\mathbb R^n$ for each of these spaces.
\end{definition}

We now shift our focus onto the space of \emph{double forms}. 

\begin{definition}
    Let \[\Lambda^{2,2}_{\sym}=\Lambda^2\odot\Lambda^2\] denote the space of \emph{double $2$-forms} on $\mathbb R^n$, where $\odot$ denotes the symmetric tensor product, given by \[\alpha\odot\beta=\alpha\otimes\beta+\beta\otimes\alpha\] for any $\alpha,\beta\in\Lambda^2$. Equivalently, $\Lambda^{2,2}_{\sym}$ can be viewed as the space of $4$-tensors that are antisymmetric in the first two indices, antisymmetric in the last two indices, and symmetric with respect to swapping the first and last pairs of indices. That is, for any $\omega\in\Lambda^{2,2}_{\sym}$ we have \[\omega(x,y;z,w)=-\omega(y,x;z,w)=-\omega(x,y;w,z)=\omega(z,w;x,y)\]
    for all $x,y,z,w\in\mathbb R^n$.
\end{definition}

\begin{definition}
    We define the \emph{wedge map} \[\wedge:\Lambda^{2,2}_{\sym}\to\Lambda^4\]
    by \[\wedge(\alpha\odot\beta):=\alpha\wedge\beta\] for any $\alpha,\beta\in\Lambda^2$. 
    We also define \[\Lambda_0^{2,2}:=\ker(\wedge)\] as the subspace of symmetric double $2$-forms that map to zero under the wedge map.
\end{definition}

We note that $\Lambda^{2,2}_0$ is the space of symmetric $(2,2)$-forms that satisfy the Bianchi identity; see \cite[Sections~2.4.3 and 2.4.1]{bega25double}.

\begin{lemma}\label{lemma:inclusion_map}
    Every element of $\Lambda^4$ satisfies the symmetries of $\Lambda^{2,2}_{\sym}$. 
\end{lemma}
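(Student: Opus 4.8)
The plan is to exploit the fact that a fully antisymmetric $4$-tensor is alternating with respect to the entire symmetric group $S_4$, and then to check that each of the three defining symmetries of $\Lambda^{2,2}_{\sym}$ is induced by a particular permutation whose sign is exactly right. Concretely, for any $\omega\in\Lambda^4$ and any permutation $\sigma\in S_4$, the defining alternating property gives
\[
\omega(v_{\sigma(1)},v_{\sigma(2)},v_{\sigma(3)},v_{\sigma(4)})=\operatorname{sgn}(\sigma)\,\omega(v_1,v_2,v_3,v_4).
\]
The whole argument then reduces to identifying three permutations and computing their signs, so I would set $v_1=x$, $v_2=y$, $v_3=z$, $v_4=w$ and proceed symmetry by symmetry.

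First I would handle the two antisymmetry conditions. Interchanging the first two arguments corresponds to the transposition $(1\,2)$, of sign $-1$, which gives $\omega(y,x;z,w)=-\omega(x,y;z,w)$; interchanging the last two arguments corresponds to $(3\,4)$, again of sign $-1$, which gives $\omega(x,y;w,z)=-\omega(x,y;z,w)$. These reproduce the required antisymmetry in the first pair and in the last pair of indices directly from alternation.

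The only step requiring a moment of care is the pair-swap symmetry $\omega(x,y;z,w)=\omega(z,w;x,y)$. Here $\omega(z,w;x,y)=\omega(v_3,v_4,v_1,v_2)$, so the relevant permutation sends $(1,2,3,4)$ to $(3,4,1,2)$, namely $\sigma=(1\,3)(2\,4)$. Being a product of two transpositions, this permutation is \emph{even}, with sign $+1$, so alternation yields an equality rather than a sign change, exactly matching the symmetric pairing in $\Lambda^{2,2}_{\sym}$. This is the main (and essentially the only) subtlety: one must confirm that the pair swap is even, so that the two would-be sign changes cancel. Combining the three computations shows that every $\omega\in\Lambda^4$ satisfies all three symmetries, which completes the proof.
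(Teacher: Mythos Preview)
Your proof is correct and follows essentially the same approach as the paper: both use full antisymmetry of $\omega\in\Lambda^4$ to verify the two single-index antisymmetries, and both observe that the pair swap $(x,y)\leftrightarrow(z,w)$ is an even permutation (the paper phrases this as ``requires two swaps''), yielding the required symmetry. Your version is slightly more explicit in naming the permutations, but the argument is the same.
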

\begin{proof}
    If $\omega\in \Lambda^4$, then $\omega$ is fully antisymmetric in all four of its indices. In particular, we have that $\omega(x,y,z,w)=-\omega(y,x,z,w)$ and $\omega(x,y,z,w)=-\omega(x,y,w,z)$. Swapping the first and last pairs of indices requires two swaps, which is a positive permutation, so $\omega(z,w,x,y)=\omega(x,y,z,w)$.
\end{proof}

Consequently, $\Lambda^4$ can be identified with a subspace of $\Lambda^{2,2}_{\sym}$. We make this identification explicit with an inclusion map.

\begin{definition}
    Let
    \[i:\Lambda^4\hookrightarrow\Lambda^{2,2}_{\sym}\] be the natural inclusion map defined by \[(i\omega)(x,y;z,w):=\omega(x,y,z,w),\] for all $x,y,z,w\in\mathbb R^n$. 
\end{definition}

\begin{proposition}\label{prop:wedge-inclusion}
For any $\omega \in \Lambda^4$, the wedge map and inclusion map satisfy 
\[
\wedge\bigl(i(\omega)\bigr) = 3\omega.
\]
\end{proposition}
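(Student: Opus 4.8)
The plan is to recognize the wedge map as a constant multiple of the antisymmetrization operator and then to exploit the fact that $i\omega$ is already totally antisymmetric. Write $\mathrm{Alt}$ for the normalized antisymmetrization of $4$-tensors,
\[
\mathrm{Alt}(T)(v_1,v_2,v_3,v_4)=\frac{1}{4!}\sum_{\sigma\in S_4}\operatorname{sgn}(\sigma)\,T\bigl(v_{\sigma(1)},v_{\sigma(2)},v_{\sigma(3)},v_{\sigma(4)}\bigr),
\]
whose image is exactly $\Lambda^4$. I claim that $\wedge(\Omega)=3\,\mathrm{Alt}(\Omega)$ for every $\Omega\in\Lambda^{2,2}_{\sym}$; the proposition then follows immediately, since $\omega\in\Lambda^4$ means $i\omega$ is a totally antisymmetric $4$-tensor, so $\mathrm{Alt}(i\omega)=i\omega$ is identified with $\omega$, giving $\wedge(i(\omega))=3\,\mathrm{Alt}(i\omega)=3\omega$.

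Because $\wedge$ and $\mathrm{Alt}$ are both linear and $\Lambda^{2,2}_{\sym}$ is spanned by simple products $\alpha\odot\beta$ with $\alpha,\beta\in\Lambda^2$, it suffices to verify the claim on such elements. In the wedge-product convention for which $e^1\wedge e^2\wedge e^3\wedge e^4$ evaluates to $1$ on $(e_1,e_2,e_3,e_4)$, the wedge of two $2$-forms is $\alpha\wedge\beta=\tfrac{4!}{2!\,2!}\,\mathrm{Alt}(\alpha\otimes\beta)=6\,\mathrm{Alt}(\alpha\otimes\beta)$. Moreover $\beta\otimes\alpha$ is obtained from $\alpha\otimes\beta$ by the index permutation $(1\,3)(2\,4)$, which is even, so $\mathrm{Alt}(\beta\otimes\alpha)=\mathrm{Alt}(\alpha\otimes\beta)$ and hence $\mathrm{Alt}(\alpha\odot\beta)=2\,\mathrm{Alt}(\alpha\otimes\beta)$. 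Combining these gives $\wedge(\alpha\odot\beta)=\alpha\wedge\beta=6\,\mathrm{Alt}(\alpha\otimes\beta)=3\,\mathrm{Alt}(\alpha\odot\beta)$, which is the claim.

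The only point requiring genuine care is the normalization constant: the $3$ is precisely $\tfrac12\cdot\tfrac{4!}{2!\,2!}$, coming from the factor $6$ in the form-wedge convention against the factor $2$ introduced by $\odot$. It is reassuring to see the same constant emerge from a direct basis computation. By linearity one reduces to $\omega=e^1\wedge e^2\wedge e^3\wedge e^4$ (the statement is vacuous for $n<4$), for which one can check that
\[
i\omega=(e^1\wedge e^2)\odot(e^3\wedge e^4)-(e^1\wedge e^3)\odot(e^2\wedge e^4)+(e^1\wedge e^4)\odot(e^2\wedge e^3).
\]
Applying $\wedge$ to each summand and using $(e^1\wedge e^2)\wedge(e^3\wedge e^4)=\omega$, $(e^1\wedge e^3)\wedge(e^2\wedge e^4)=-\omega$, and $(e^1\wedge e^4)\wedge(e^2\wedge e^3)=\omega$, the three terms contribute $\omega+\omega+\omega=3\omega$, the factor $3$ now visibly counting the three ways to partition $\{1,2,3,4\}$ into two pairs. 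I expect the antisymmetrization argument to be the cleaner to write up in full, with the basis computation serving as a check on the convention and the constant.
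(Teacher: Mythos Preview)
Your proof is correct. The paper's argument is essentially your ``sanity check'': it takes $\omega=\alpha_1\wedge\alpha_2\wedge\alpha_3\wedge\alpha_4$, writes out the $24$-term alternating tensor, regroups those terms by hand into the three symmetric products $(\alpha_1\wedge\alpha_2)\odot(\alpha_3\wedge\alpha_4)$, $(\alpha_1\wedge\alpha_3)\odot(\alpha_4\wedge\alpha_2)$, $(\alpha_1\wedge\alpha_4)\odot(\alpha_2\wedge\alpha_3)$, and then wedges each to recover $\omega+\omega+\omega$. Your primary argument is genuinely different and cleaner: rather than computing $i(\omega)$ explicitly, you identify the wedge map as $3\,\mathrm{Alt}$ on all of $\Lambda^{2,2}_{\sym}$ via the standard identity $\alpha\wedge\beta=\tfrac{4!}{2!\,2!}\mathrm{Alt}(\alpha\otimes\beta)$ together with the evenness of $(1\,3)(2\,4)$, and then simply observe that $\mathrm{Alt}$ fixes $i\omega$. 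This buys you a proof that avoids the $24$-term bookkeeping and makes the origin of the constant $3=\tfrac12\binom{4}{2}$ transparent; the paper's approach, by contrast, yields the explicit decomposition of $i\omega$ into symmetric products as a by-product, which your argument does not produce (though your sanity check recovers it).
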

\begin{proof}
It suffices to verify on a spanning set. Let $\alpha_1,\alpha_2,\alpha_3,\alpha_4\in\Lambda^1$ and set
\[
\omega := \alpha_1 \wedge \alpha_2 \wedge \alpha_3 \wedge \alpha_4.
\]
As a tensor, $\omega$ corresponds to the fully antisymmetric combination of all $24$ index permutations of the $\alpha_i$, namely
\[
\omega = \sum_{\sigma \in S_4} \mathrm{sign}(\sigma) \cdot 
\alpha_{\sigma(1)} \otimes \alpha_{\sigma(2)} \otimes \alpha_{\sigma(3)} \otimes  \alpha_{\sigma(4)},
\]
where $S_4$ is the permutation group on $\{1,2,3,4\}$. We can now sort these as follows into three groups of $8$ terms, each of which corresponds to a symmetric double-form:
\begin{align*}
(\alpha_1\wedge\alpha_2) \odot (\alpha_3\wedge\alpha_4)
&=
\phantom{+} \alpha_1 \otimes \alpha_2 \otimes \alpha_3 \otimes \alpha_4
+ \alpha_3 \otimes \alpha_4 \otimes \alpha_1 \otimes \alpha_2 \\
&\quad
- \alpha_1 \otimes \alpha_2 \otimes \alpha_4 \otimes \alpha_3
- \alpha_4 \otimes \alpha_3 \otimes \alpha_1 \otimes \alpha_2 \\
&\quad
- \alpha_2 \otimes \alpha_1 \otimes \alpha_3 \otimes \alpha_4
- \alpha_3 \otimes \alpha_4 \otimes \alpha_2 \otimes \alpha_1 \\
&\quad
+ \alpha_2 \otimes \alpha_1 \otimes \alpha_4 \otimes \alpha_3
+ \alpha_4 \otimes \alpha_3 \otimes \alpha_2 \otimes \alpha_1 \\[1.5ex]
(\alpha_1\wedge\alpha_3) \odot (\alpha_4\wedge\alpha_2)
&=
\phantom{+} \alpha_1 \otimes \alpha_3 \otimes \alpha_4 \otimes \alpha_2
+ \alpha_4 \otimes \alpha_2 \otimes \alpha_1 \otimes \alpha_3 \\
&\quad
- \alpha_1 \otimes \alpha_3 \otimes \alpha_2 \otimes \alpha_4
- \alpha_4 \otimes \alpha_2 \otimes \alpha_3 \otimes \alpha_1 \\
&\quad
- \alpha_3 \otimes \alpha_1 \otimes \alpha_4 \otimes \alpha_2
- \alpha_2 \otimes \alpha_4 \otimes \alpha_1 \otimes \alpha_3 \\
&\quad
+ \alpha_3 \otimes \alpha_1 \otimes \alpha_2 \otimes \alpha_4
+ \alpha_2 \otimes \alpha_4 \otimes \alpha_3 \otimes \alpha_1 \\[1.5ex]
(\alpha_1\wedge\alpha_4) \odot (\alpha_2\wedge\alpha_3)
&=
\phantom{+} \alpha_1 \otimes \alpha_4 \otimes \alpha_2 \otimes \alpha_3
+ \alpha_2 \otimes \alpha_3 \otimes \alpha_1 \otimes \alpha_4 \\
&\quad
- \alpha_1 \otimes \alpha_4 \otimes \alpha_3 \otimes \alpha_2
- \alpha_3 \otimes \alpha_2 \otimes \alpha_1 \otimes \alpha_4 \\
&\quad
- \alpha_4 \otimes \alpha_1 \otimes \alpha_2 \otimes \alpha_3
- \alpha_2 \otimes \alpha_3 \otimes \alpha_4 \otimes \alpha_1 \\
&\quad
+ \alpha_4 \otimes \alpha_1 \otimes \alpha_3 \otimes \alpha_2
+ \alpha_3 \otimes \alpha_2 \otimes \alpha_4 \otimes \alpha_1
\end{align*}
So
\[
i(\omega) = (\alpha_1\wedge\alpha_2) \odot (\alpha_3\wedge\alpha_4)
+ (\alpha_1\wedge\alpha_3) \odot (\alpha_4\wedge\alpha_2)
+ (\alpha_1\wedge\alpha_4)\odot (\alpha_2\wedge\alpha_3).
\]
It follows that
\begin{align*}
\wedge\bigl(i(\omega)\bigr)
&=(\alpha_1\wedge\alpha_2\wedge\alpha_3\wedge\alpha_4) 
 + (\alpha_1\wedge\alpha_3\wedge\alpha_4\wedge\alpha_2) 
 + (\alpha_1\wedge\alpha_4\wedge\alpha_2\wedge\alpha_3) \\
&= \omega + \omega + \omega \\
&= 3\omega. \qedhere
\end{align*}
\end{proof}

\begin{corollary}\label{cor:wedge_surjective}
The wedge map 
\[
\wedge : \Lambda^{2,2}_{\sym} \longrightarrow \Lambda^4
\]
is surjective.
\end{corollary}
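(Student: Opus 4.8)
The plan is to produce, for every target $\omega\in\Lambda^4$, an explicit preimage under the wedge map; this is essentially immediate from Proposition~\ref{prop:wedge-inclusion}, in which the relevant constant has already been computed. First I would fix an arbitrary $\omega\in\Lambda^4$ and propose the candidate preimage $\frac{1}{3}\,i(\omega)\in\Lambda^{2,2}_{\sym}$. This candidate does lie in the domain, since $i$ maps $\Lambda^4$ into $\Lambda^{2,2}_{\sym}$, as justified by Lemma~\ref{lemma:inclusion_map} and the subsequent definition of $i$.

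Next I would invoke linearity of the wedge map. The map $\wedge$ is defined on the spanning set of decomposable symmetric tensors $\alpha\odot\beta$ and extended linearly, so scalars pass through it. Pulling out the factor $\frac{1}{3}$ and then applying Proposition~\ref{prop:wedge-inclusion} gives
\[
\wedge\Bigl(\tfrac{1}{3}\,i(\omega)\Bigr)=\tfrac{1}{3}\,\wedge\bigl(i(\omega)\bigr)=\tfrac{1}{3}\cdot 3\omega=\omega.
\]
Since $\omega$ was arbitrary, every element of $\Lambda^4$ lies in the image of $\wedge$, which is exactly surjectivity.

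There is essentially no obstacle here: all of the genuine content was already carried out in the proof of Proposition~\ref{prop:wedge-inclusion}, where the factor of $3$ was identified by sorting the $24$ antisymmetrized terms into three symmetric double-forms. The only mild point to keep in view is that we are working over $\mathbb R$, a field of characteristic zero, so division by $3$ is legitimate. If one wished to avoid even mentioning this, an equivalent phrasing is that the image of $\wedge$ contains $3\omega$ for every $\omega\in\Lambda^4$ by Proposition~\ref{prop:wedge-inclusion}, and since the image is a linear subspace of the real vector space $\Lambda^4$, it must contain $\omega$ as well; this makes the role of the constant transparent without any explicit inversion.
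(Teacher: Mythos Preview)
Your proposal is correct and is exactly the argument the paper intends: the corollary is stated without proof immediately after Proposition~\ref{prop:wedge-inclusion}, and the implicit reasoning is precisely that $\tfrac{1}{3}i$ is a right inverse to $\wedge$. There is nothing to add or change.
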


\begin{proposition}
\label{prop:decomposition_sym22}
The space of symmetric double $2$-forms admits a natural decomposition as 
\[
\Lambda^{2,2}_{\sym}
=
\Lambda_0^{2,2}
\oplus
\Lambda^4.
\]
Here, with a slight abuse of notation, we identify $\Lambda^4$ with its image under the inclusion map $i:\Lambda^4\hookrightarrow \Lambda^{2,2}_{\sym}$ and treat it as a subspace of $\Lambda^{2,2}_{\sym}$.
\end{proposition}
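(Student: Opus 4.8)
The plan is to exhibit an explicit idempotent projection of $\Lambda^{2,2}_{\sym}$ onto the subspace $i(\Lambda^4)$ whose kernel is exactly $\Lambda^{2,2}_0$, and then invoke the standard fact that an idempotent on a vector space splits it as the direct sum of its image and its kernel. The entire decomposition is essentially forced by the relation $\wedge\circ i = 3\,\mathrm{id}_{\Lambda^4}$ established in Proposition~\ref{prop:wedge-inclusion}, so the argument is short and purely formal once that identity is in hand.

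Concretely, I would define
\[
P := \tfrac{1}{3}\, i \circ \wedge : \Lambda^{2,2}_{\sym} \longrightarrow \Lambda^{2,2}_{\sym}.
\]
First I would check that $P$ is idempotent. For any $\eta\in\Lambda^{2,2}_{\sym}$, the element $\wedge\eta$ lies in $\Lambda^4$, so applying Proposition~\ref{prop:wedge-inclusion} with $\omega=\wedge\eta$ gives $\wedge\bigl(i(\wedge\eta)\bigr)=3\,\wedge\eta$. Hence
\[
P^2\eta = \tfrac{1}{9}\, i\bigl(\wedge(i(\wedge\eta))\bigr) = \tfrac{1}{9}\, i\bigl(3\,\wedge\eta\bigr) = \tfrac{1}{3}\, i(\wedge\eta) = P\eta,
\]
so $P^2=P$.

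Next I would identify the image and kernel of $P$. Since $P=\tfrac13 i\circ\wedge$ and $i$ is injective (if $i\omega=0$ then $3\omega=\wedge(i\omega)=0$, so $\omega=0$), we have $P\eta=0$ if and only if $\wedge\eta=0$, i.e.\ $\ker P = \ker(\wedge) = \Lambda^{2,2}_0$. For the image, Proposition~\ref{prop:wedge-inclusion} shows that $P$ restricts to the identity on $i(\Lambda^4)$: for $\omega\in\Lambda^4$ we get $P(i\omega)=\tfrac13 i(\wedge(i\omega))=\tfrac13 i(3\omega)=i\omega$. Combined with the evident inclusion $\operatorname{im}P\subseteq i(\Lambda^4)$, this gives $\operatorname{im}P = i(\Lambda^4)$.

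Finally, I would conclude using the idempotent splitting: writing $\eta = P\eta + (\eta - P\eta)$ expresses every $\eta$ as a sum of an element of $\operatorname{im}P = i(\Lambda^4)$ and an element of $\ker P = \Lambda^{2,2}_0$ (the latter because $P(\eta-P\eta)=P\eta-P^2\eta=0$), while $\ker P\cap\operatorname{im}P=\{0\}$ since any $\eta=P\zeta$ with $P\eta=0$ satisfies $\eta=P\zeta=P^2\zeta=P\eta=0$. This yields $\Lambda^{2,2}_{\sym}=\Lambda^{2,2}_0\oplus i(\Lambda^4)$, as claimed. I do not anticipate any genuine obstacle here; the only points requiring care are tracking the normalization factor of $3$ correctly and confirming that the two summands meet trivially, both of which follow mechanically from Proposition~\ref{prop:wedge-inclusion}.
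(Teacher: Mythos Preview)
Your proof is correct and is essentially the same as the paper's: the paper observes that $\tfrac13 i$ is a section of $\wedge$ (by Proposition~\ref{prop:wedge-inclusion}) and invokes the splitting lemma for the resulting split short exact sequence, whereas you unpack that lemma by hand via the idempotent $P=\tfrac13\, i\circ\wedge$. The two arguments are the same content at different levels of abstraction.
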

\begin{proof}
By definition,  $\Lambda_0^{2,2} $ is the kernel of the wedge map
\[
\wedge : \Lambda^{2,2}_{\sym} \to \Lambda^4.
\]
By Corollary~\ref{cor:wedge_surjective}, the wedge map is surjective, and we therefore have the split exact sequence
\[
\begin{tikzcd}[column sep=2.5em]
0 \arrow[r] 
& \Lambda_0^{2,2} \arrow[r, hook] 
& \Lambda^{2,2}_{\sym} 
  \arrow[r, shift left=0.7ex, "\wedge"] 
  \arrow[r, shift right=0.7ex, "\frac{1}{3}i"', leftarrow]
& \Lambda^4 \arrow[r] 
& 0
\end{tikzcd}
\]
The claim then follows by the splitting lemma.
\end{proof}

\begin{corollary}
\label{corollary:dimension_counting}
The space $\Lambda^{2,2}_{\sym}$ has dimension
\[
\dim\bigl(\Lambda^{2,2}_{\sym}\bigr) = \tfrac{1}{8}n(n-1)(n^2-n+2),
\]
and the subspace $\Lambda_0^{2,2}$ has dimension
\[
\dim\Lambda_0^{2,2} = \tfrac{1}{12}n^2(n+1)(n-1).
\]
\end{corollary}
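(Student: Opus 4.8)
The plan is to reduce both dimension counts to elementary binomial computations, leveraging the direct sum decomposition established in Proposition~\ref{prop:decomposition_sym22}. First I would compute $\dim\Lambda^{2,2}_{\sym}$ directly. Since $\Lambda^{2,2}_{\sym}=\Lambda^2\odot\Lambda^2$ is the symmetric square of $\Lambda^2$, and the symmetric square of a space of dimension $m$ has dimension $\binom{m+1}{2}$, I would set $m=\dim\Lambda^2=\binom{n}{2}=\tfrac{1}{2}n(n-1)$. This gives
\[
\dim\Lambda^{2,2}_{\sym}=\binom{m+1}{2}=\frac{m(m+1)}{2}=\frac{1}{2}\cdot\frac{n(n-1)}{2}\cdot\frac{n(n-1)+2}{2},
\]
and rewriting $n(n-1)+2=n^2-n+2$ yields the claimed formula $\tfrac{1}{8}n(n-1)(n^2-n+2)$.

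For the second formula, I would invoke the decomposition $\Lambda^{2,2}_{\sym}=\Lambda_0^{2,2}\oplus\Lambda^4$ from Proposition~\ref{prop:decomposition_sym22}, so that dimensions add and
\[
\dim\Lambda_0^{2,2}=\dim\Lambda^{2,2}_{\sym}-\dim\Lambda^4.
\]
Since $\dim\Lambda^4=\binom{n}{4}=\tfrac{1}{24}n(n-1)(n-2)(n-3)$, I would substitute the first formula, factor out the common $\tfrac{1}{24}n(n-1)$, and reduce the problem to simplifying the bracketed quantity $3(n^2-n+2)-(n-2)(n-3)$.

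The remaining work is the routine polynomial simplification: expanding gives $3(n^2-n+2)=3n^2-3n+6$ and $(n-2)(n-3)=n^2-5n+6$, whose difference is $2n^2+2n=2n(n+1)$. Hence
\[
\dim\Lambda_0^{2,2}=\frac{n(n-1)}{24}\cdot 2n(n+1)=\frac{1}{12}n^2(n+1)(n-1),
\]
as claimed. There is no genuine obstacle in this argument; the only care required is correctly recalling the symmetric-square dimension formula $\binom{m+1}{2}$ (as opposed to the exterior-square formula $\binom{m}{2}$) and keeping track of the common factor in the final subtraction.
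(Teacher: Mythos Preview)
Your proposal is correct and follows essentially the same approach as the paper: both compute $\dim\Lambda^{2,2}_{\sym}=\binom{\binom{n}{2}+1}{2}$ via the symmetric-square formula, then subtract $\binom{n}{4}$ and perform the identical algebraic simplification $3(n^2-n+2)-(n-2)(n-3)=2n(n+1)$. The only cosmetic difference is that the paper cites surjectivity of the wedge map (Corollary~\ref{cor:wedge_surjective}) and the rank--nullity theorem, whereas you cite the equivalent direct-sum decomposition (Proposition~\ref{prop:decomposition_sym22}).
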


\begin{proof}
The space $\Lambda^2$ of $2$-forms on an $n$-dimensional vector space has dimension $\binom{n}{2}$, so the space $\Lambda^{2,2}_{\sym}$ of symmetric products of $2$-forms has dimension
\[
\binom{\binom n2+1}2=\tfrac18 n(n-1)(n^2-n+2).
\]
The space $\Lambda^4$ of fully antisymmetric $4$-forms has dimension $\binom{n}{4}$. 
Since the wedge map is surjective by Corollary~\ref{cor:wedge_surjective}, the kernel $\Lambda_0^{2,2}$ has dimension
\[
\dim\Lambda_0^{2,2} = \binom{\binom{n}{2} + 1}{2} - \binom{n}{4}.
\]
So,
\begin{align*}
\dim\Lambda_0^{2,2}
&= \tfrac1{24}n(n-1)\bigl(3(n^2-n+2) - (n-2)(n-3)\bigr)\\
&= \tfrac{1}{12}n^2(n+1)(n-1).\qedhere
\end{align*}
\end{proof}

See also \cite[Lemma~5.16]{bega25double}.

\section{Constant coefficient spaces}\label{sec:simplex}
We now study the constant coefficient spaces on the standard simplex in $\mathbb R^{n+1}$. 
\begin{definition}\label{def:simplex}
Let
\[
T^n = \Bigl\{\lambda = (\lambda_0,\lambda_1,\dots,\lambda_n)\in\mathbb R^{n+1} :
\lambda_i \ge 0,\ \sum_{i=0}^n \lambda_i = 1\Bigr\}
\]
be the standard $n$-dimensional simplex, with barycentric coordinates $\lambda_i$.

The numbers $\{0,\dots,n\}$ index the vertices of $T^n$: vertex $i$ is the vertex satisfying $\lambda_i=1$. For a face $F$ of $T^n$, let $I(F)$ denote the subset of $\{0,\dotsc,n\}$ corresponding to the vertices of $F$.

Given a form or double form $\varphi$ on $T^n$ and face $F$ of $T^n$, we can pull back $\varphi$ to $F$ via the inclusion map $F\hookrightarrow T$; we call the resulting form on $F$ the \emph{trace} of $\varphi$ and denote it $\tr_F\varphi$.
\end{definition}

We will construct a geometrically decomposed basis of $\Lambda^{2,2}_0(T^n)$, with basis elements associated to faces of $T^n$, analogous to the geometric decompositions of differential forms \cite{Arnold_2009} and bilinear forms \cite{li2018regge}. See also \cite[Section~5.3.3]{bega25double}.

The differentials $d\lambda_i$ of the barycentric coordinates are one-forms on $T^n\subset\mathbb R^{n+1}$; they span $\Lambda^1(T^n)$ but are not linearly independent because $\sum_{i=0}^nd\lambda_i=0$. 
Likewise, the wedge products $d\lambda_i\wedge d\lambda_j$ span the space $\Lambda^2(T^n)$ but are not linearly independent.

\begin{notation}
    For convenience, we use the shorthand $d\lambda_{ij}:=d\lambda_i\wedge d\lambda_j.$ Similarly, $d\lambda_{ijkl}=d\lambda_i\wedge d\lambda_j\wedge d\lambda_k\wedge d\lambda_l$.
\end{notation}

\begin{definition}\label{def:betagamma}
For an $n$-dimensional simplex with barycentric coordinate functions $\lambda_i$, define for distinct indices $i,j,k$ the symmetric double form
\[
\beta_{ijk} := d\lambda_{ij}\odot d\lambda_{jk} + d\lambda_{jk}\odot d\lambda_{ki} + d\lambda_{ki}\odot d\lambda_{ij},
\]
which is associated with the triangular face determined by vertices $i,j,k$, denoted $T_{ijk}$.

Likewise, for distinct indices $i,j,k,l$, define
\[
\gamma_{ijkl} := d\lambda_{ik}\odot d\lambda_{lj} - d\lambda_{il}\odot d\lambda_{jk},
\]
which is naturally associated with the tetrahedral face determined by vertices $i,j,k,l$, denoted $T_{ijkl}$.
\end{definition}

\begin{proposition}\label{prop:beta_gamma_kernel}
The beta forms $\beta_{ijk}$ and the gamma forms $\gamma_{ijkl}$ belong to $\Lambda^{2,2}_0(T^n)$. 
In other words, both $\beta_{ijk}$ and $\gamma_{ijkl}$ wedge to zero in $\Lambda^4(T^n)$.
\end{proposition}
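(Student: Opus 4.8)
The plan is to compute the image of each form directly under the wedge map $\wedge\colon\Lambda^{2,2}_{\sym}\to\Lambda^4$, which by definition sends a symmetric product $\alpha\odot\beta$ of two-forms to the ordinary wedge product $\alpha\wedge\beta$ and then extends linearly. Since $\beta_{ijk}$ and $\gamma_{ijkl}$ are each written as explicit linear combinations of symmetric products of the $d\lambda_{ab}$, it suffices to wedge the two factors in every term and show that the results either vanish outright or cancel in pairs.

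For $\beta_{ijk}$, I would apply $\wedge$ termwise to obtain
\[
\wedge(\beta_{ijk}) = d\lambda_{ij}\wedge d\lambda_{jk} + d\lambda_{jk}\wedge d\lambda_{ki} + d\lambda_{ki}\wedge d\lambda_{ij}.
\]
The key observation is that each of the three terms contains a repeated barycentric differential: the first repeats $d\lambda_j$, the second repeats $d\lambda_k$, and the third repeats $d\lambda_i$. Since $d\lambda_a\wedge d\lambda_a=0$ for every index $a$, each term vanishes individually, and hence $\wedge(\beta_{ijk})=0$.

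For $\gamma_{ijkl}$, the argument is a short sign computation rather than an outright vanishing. Applying $\wedge$ gives
\[
\wedge(\gamma_{ijkl}) = d\lambda_{ik}\wedge d\lambda_{lj} - d\lambda_{il}\wedge d\lambda_{jk} = d\lambda_i\wedge d\lambda_k\wedge d\lambda_l\wedge d\lambda_j - d\lambda_i\wedge d\lambda_l\wedge d\lambda_j\wedge d\lambda_k.
\]
Here I would reorder each four-fold wedge into the standard form $d\lambda_{ijkl}$ and track the sign of the reordering permutation. In both terms the permutation fixes $i$ and cyclically permutes the remaining three indices — a $3$-cycle, which is even — so each term equals $+d\lambda_{ijkl}$. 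The two contributions therefore cancel, giving $\wedge(\gamma_{ijkl})=0$.

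The whole argument is elementary; the only place requiring care is the sign bookkeeping for the gamma form, where one must correctly identify each reordering as an even permutation and confirm that both terms carry the \emph{same} sign, so that the subtraction in the definition of $\gamma_{ijkl}$ produces the cancellation. Once the wedge map has been applied termwise, no further structure of $\Lambda^{2,2}_{\sym}$ is needed, and membership of both forms in $\Lambda^{2,2}_0=\ker(\wedge)$ follows immediately.
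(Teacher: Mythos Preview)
Your proof is correct and follows essentially the same approach as the paper: apply the wedge map termwise, observe that each term of $\wedge(\beta_{ijk})$ vanishes due to a repeated $d\lambda$ factor, and verify by a sign computation that the two terms of $\wedge(\gamma_{ijkl})$ cancel. The only cosmetic difference is that the paper reorders both four-forms to $d\lambda_{ilkj}$ rather than $d\lambda_{ijkl}$, but the content is identical.
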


\begin{proof}
A form is in $\Lambda^{2,2}_0(T^n)$ if and only if its wedge vanishes.

Consider
\[
\beta_{ijk} = d\lambda_{ij}\odot d\lambda_{jk} + d\lambda_{jk}\odot d\lambda_{ki} + d\lambda_{ki}\odot d\lambda_{ij}.
\]
Then
\[
\wedge\bigl(\beta_{ijk}\bigr)
:=
\bigl(d\lambda_{ij}\wedge d\lambda_{jk}\bigr)
+
\bigl(d\lambda_{jk}\wedge d\lambda_{ki}\bigr)
+
\bigl(d\lambda_{ki}\wedge d\lambda_{ij}\bigr),
\]
which is trivially zero, as $d\lambda_l\wedge d\lambda_l=0$ for all $l\in\{i,j,k\}$. 

Now consider
\[
\gamma_{ijkl}=d\lambda_{ik}\odot d\lambda_{lj}-d\lambda_{il}\odot d\lambda_{jk}.
\]
Then, we have
\[
\begin{split}
\wedge\bigl(\gamma_{ijkl}\bigr)
&:=
\bigl(d\lambda_{ik} \wedge d\lambda_{lj}\bigr)
-\bigl(d\lambda_{il} \wedge d\lambda_{jk}\bigr)\\
 &\phantom{:}= -d\lambda_{ilkj} + d\lambda_{ilkj} = 0.
 \end{split}
\]
Therefore both the beta forms and the gamma forms all lie in $\Lambda^{2,2}_0(T^n)$.
\end{proof}

\begin{lemma}\label{lemma:vanishing_faces}
The beta forms $ \beta_{ijk} $ vanish on any face not containing $ T_{ijk} $, and likewise the $\gamma_{ijkl} $ vanish on any face not containing $ T_{ijkl} $.
\end{lemma}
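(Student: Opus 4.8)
The plan is to exploit the single defining feature of barycentric coordinates: if $F$ is a face of $T^n$ and $m\notin I(F)$, then $\lambda_m$ restricts to the constant function $0$ on $F$, so that $\tr_F d\lambda_m = d(\tr_F\lambda_m)=0$. Since the trace is the pullback along the inclusion $F\hookrightarrow T^n$, it is a homomorphism with respect to both the wedge product and the symmetric tensor product $\odot$. Consequently, any monomial term in $\beta_{ijk}$ or $\gamma_{ijkl}$ that has $d\lambda_m$ as one of its wedge factors must trace to zero on $F$. The entire argument therefore reduces to a bookkeeping check of which indices appear in each term.

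First I would record the compatibility of $\tr_F$ with the algebraic operations, namely $\tr_F(d\lambda_{ab}\odot d\lambda_{cd}) = (\tr_F d\lambda_a\wedge\tr_F d\lambda_b)\odot(\tr_F d\lambda_c\wedge\tr_F d\lambda_d)$, which is just functoriality of pullback together with the definition of $\odot$. With this in hand, a summand vanishes under $\tr_F$ as soon as it contains a factor $d\lambda_m$ with $m\notin I(F)$.

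Next I would make the combinatorial observation precise. Every summand of $\beta_{ijk}=d\lambda_{ij}\odot d\lambda_{jk}+d\lambda_{jk}\odot d\lambda_{ki}+d\lambda_{ki}\odot d\lambda_{ij}$ involves exactly the index set $\{i,j,k\}=I(T_{ijk})$; in particular each of $i$, $j$, and $k$ occurs as a wedge factor in all three summands. Likewise, both summands of $\gamma_{ijkl}=d\lambda_{ik}\odot d\lambda_{lj}-d\lambda_{il}\odot d\lambda_{jk}$ involve the full index set $\{i,j,k,l\}=I(T_{ijkl})$, so each of $i,j,k,l$ appears in both summands. This is immediate by inspection, but it is the crux of the lemma.

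Finally I would suppose that $F$ does not contain $T_{ijk}$, so that $I(T_{ijk})\not\subseteq I(F)$ and hence some $m\in\{i,j,k\}$ lies outside $I(F)$. Since $d\lambda_m$ appears in every summand of $\beta_{ijk}$, the first two steps give $\tr_F\beta_{ijk}=0$, and the identical argument with $\{i,j,k,l\}$ in place of $\{i,j,k\}$ handles $\gamma_{ijkl}$. There is no real obstacle here; the only point requiring care is the index bookkeeping of the third step, since in $\gamma_{ijkl}$ the four indices are distributed across the factors in a less symmetric pattern than in $\beta_{ijk}$. Once one confirms that each of $i,j,k,l$ genuinely occurs in both summands, the conclusion is automatic.
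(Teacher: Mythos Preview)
Your proposal is correct and follows essentially the same approach as the paper: pick a vertex $m$ in $\{i,j,k\}$ (respectively $\{i,j,k,l\}$) that is not in $I(F)$, use $\lambda_m\equiv 0$ on $F$ to conclude $\tr_F d\lambda_m=0$, and then observe that every summand of $\beta_{ijk}$ (respectively $\gamma_{ijkl}$) contains $d\lambda_m$ as a factor. The paper's version is simply more terse, invoking ``without loss of generality'' to fix the missing index as $i$ and omitting the explicit check that each index appears in every summand.
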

\begin{proof}
Without loss of generality, suppose the face $F$ does not contain the vertex $i$. Then $\lambda_i = 0$  on $F$, so $d\lambda_i = 0$, and subsequently any expression involving $d\lambda_i$ vanishes on $F$. It follows that $\beta_{ijk} = 0$ on $F$. Likewise, $\gamma_{ijkl}=0$ on $F$.
\end{proof}

\begin{proposition}
Each form $\beta_{ijk}$ is fully symmetric in its indices:
\[
\beta_{ijk} = \beta_{jki} = \beta_{kij} = \beta_{ikj} = \beta_{jik} = \beta_{kji}.
\]
Meanwhile, each form $ \gamma_{ijkl} $ is symmetric under swapping the first two indices, symmetric under swapping the last two indices, and symmetric under swapping the first and last index pairs. Explicitly,
\begin{align*}
&\gamma_{ijkl} = \gamma_{jikl} = \gamma_{ijlk} = \gamma_{jilk} \\
={} &\gamma_{klij} = \gamma_{klji} = \gamma_{lkij} = \gamma_{lkji}.
\end{align*}
Furthermore, the $\gamma$-forms satisfy the cyclic identity
\[
\gamma_{ijkl} + \gamma_{iklj} + \gamma_{iljk} = 0.
\]
\end{proposition}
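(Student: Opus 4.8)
The plan is to reduce every claim to two elementary facts: the antisymmetry $d\lambda_{ab}=-d\lambda_{ba}$ of the wedge product on one-forms, and the symmetry $\alpha\odot\beta=\beta\odot\alpha$ of the symmetric tensor product. Every identity will then follow by substituting these into the definitions in Definition~\ref{def:betagamma} and matching terms, so the proof is essentially organized sign-bookkeeping rather than genuine computation.

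For $\beta_{ijk}$, I would first note that cyclic invariance $\beta_{ijk}=\beta_{jki}=\beta_{kij}$ is manifest: applying the $3$-cycle $i\mapsto j\mapsto k\mapsto i$ permutes the three summands $d\lambda_{ij}\odot d\lambda_{jk}$, $d\lambda_{jk}\odot d\lambda_{ki}$, $d\lambda_{ki}\odot d\lambda_{ij}$ among themselves. It then suffices to check a single transposition, say $\beta_{jik}=\beta_{ijk}$. Here I would substitute the defining expression for $\beta_{jik}$ and observe that each summand acquires two factors of the form $d\lambda_{ab}=-d\lambda_{ba}$, whose signs cancel; after reordering the two factors of each product using the symmetry of $\odot$, one recovers exactly the three summands of $\beta_{ijk}$. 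Since a $3$-cycle and a transposition generate $S_3$, full symmetry in all three indices follows, giving all six listed equalities.

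For $\gamma_{ijkl}$, I would verify only the three generating symmetries—swapping the first two indices ($\gamma_{jikl}=\gamma_{ijkl}$), swapping the last two ($\gamma_{ijlk}=\gamma_{ijkl}$), and swapping the two pairs ($\gamma_{klij}=\gamma_{ijkl}$)—each by a direct substitution into $\gamma_{abcd}=d\lambda_{ac}\odot d\lambda_{db}-d\lambda_{ad}\odot d\lambda_{bc}$, again using $d\lambda_{ab}=-d\lambda_{ba}$ and the symmetry of $\odot$ to restore the original two terms. The eight orderings listed in the statement are precisely the orbit of $ijkl$ under the order-$8$ group generated by these three involutions, so all eight equalities follow by composing the three checked cases. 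Finally, for the cyclic identity I would simply expand $\gamma_{ijkl}+\gamma_{iklj}+\gamma_{iljk}$ using the definition; each of the three distinct products $d\lambda_{ik}\odot d\lambda_{lj}$, $d\lambda_{il}\odot d\lambda_{jk}$, $d\lambda_{ij}\odot d\lambda_{kl}$ then appears exactly twice, once with each sign, so the sum telescopes to zero.

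The only point requiring genuine care is the sign-tracking in the $\gamma$ substitutions, where a misplaced factor of $-1$ can silently swap the two terms; confirming that the three generating symmetries really do sweep out all eight listed orderings (and not some proper subset) is the one place where I would double-check the combinatorics rather than treat it as automatic.
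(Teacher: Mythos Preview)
Your proposal is correct and follows exactly the approach the paper takes: the paper's proof is a single sentence stating that one verifies the identities directly from the definitions using the antisymmetry of the wedge product and the symmetry of the symmetric tensor product, which is precisely the strategy you spell out in detail. Your elaboration of the generating-symmetry argument and the telescoping cyclic sum is sound and simply makes explicit what the paper leaves to the reader.
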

\begin{proof}
One can verify these identities directly from the definitions of $\beta_{ijk}$ and $\gamma_{ijkl}$, using the antisymmetry of the wedge product, and the symmetry of the symmetric tensor product.
\end{proof}

Thus, for any set of three vertices $\{i,j,k\}$, the corresponding $\beta$-forms span a space of dimension at most one, and for any set of four vertices $\{i,j,k,l\}$, the corresponding $\gamma$-forms span a space of dimension at most two. In the remainder of this section, we will show that the dimensions of these spaces are, in fact, one and two, respectively. 

\begin{lemma}
\label{lemma:2d_mathring}
Let $ T_{ijk} $ be a triangular face spanned by vertices $ i,j,k $, and define $ \mathring{\Lambda}_0^{2,2}(T_{ijk})$ to be the subspace of $\Lambda_0^{2,2}(T_{ijk})$ of symmetric double $2$-forms vanishing on the boundary of the triangle. The space $\mathring \Lambda_0^{2,2}(T_{ijk})$ is one-dimensional and is spanned by $ \beta_{ijk} $.
\end{lemma}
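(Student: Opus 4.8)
The plan is to reduce the interior space to the full space and then pin down its dimension. First I would observe that the boundary of the triangle $T_{ijk}$ consists of edges and vertices, each of dimension at most one. Since the trace is defined as pullback along the inclusion, the trace of any double $2$-form onto an edge $E$ lands in $\Lambda^{2,2}_{\sym}$ of the one-dimensional tangent space of $E$. But $\Lambda^2(\mathbb R^1)=0$, so $\Lambda^{2,2}_{\sym}(\mathbb R^1)=0$, and hence every trace onto the boundary vanishes automatically. Consequently $\mathring\Lambda_0^{2,2}(T_{ijk})=\Lambda_0^{2,2}(T_{ijk})$, and it suffices to show that this latter space is one-dimensional and spanned by $\beta_{ijk}$.

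Next I would compute the dimension of $\Lambda_0^{2,2}(T_{ijk})$ directly. The tangent space of the triangle is two-dimensional, so $\Lambda^2(T_{ijk})$ is one-dimensional and $\Lambda^4(T_{ijk})=0$. By Proposition~\ref{prop:decomposition_sym22}, the vanishing of $\Lambda^4$ forces $\Lambda_0^{2,2}(T_{ijk})=\Lambda^{2,2}_{\sym}(T_{ijk})$, which is the symmetric square of a one-dimensional space and therefore one-dimensional. Equivalently, one can set $n=2$ in Corollary~\ref{corollary:dimension_counting}.

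It then remains to verify that $\beta_{ijk}$ is a \emph{nonzero} element of this space, since a nonzero vector spans a one-dimensional space. By Proposition~\ref{prop:beta_gamma_kernel} we already know $\beta_{ijk}\in\Lambda_0^{2,2}(T_{ijk})$, so the crux is nonvanishing. Here I would use the linear relation $d\lambda_i+d\lambda_j+d\lambda_k=0$ that holds on the triangle, which gives $d\lambda_{jk}=d\lambda_{ki}=d\lambda_{ij}$; writing $\mu$ for this common nonzero generator of $\Lambda^2(T_{ijk})$, each of the three summands of $\beta_{ijk}$ equals $\mu\odot\mu=2\,\mu\otimes\mu$, so $\beta_{ijk}=6\,\mu\otimes\mu\neq 0$.

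The one genuinely computational step is this last nonvanishing check; everything else is dimension bookkeeping. The main thing to get right is the reduction in the first paragraph---recognizing that the interior condition is vacuous because a double $2$-form cannot have nonzero trace on a one-dimensional edge---since that is precisely what lets the one-dimensional count in the second paragraph describe $\mathring\Lambda_0^{2,2}(T_{ijk})$ rather than merely $\Lambda_0^{2,2}(T_{ijk})$.
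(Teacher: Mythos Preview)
Your proof is correct and follows essentially the same route as the paper's: both reduce the interior space to the full space by noting that $2$-forms vanish on one-dimensional edges, identify $\Lambda_0^{2,2}(T_{ijk})=\Lambda^{2,2}_{\sym}(T_{ijk})$ as one-dimensional since $\Lambda^4=0$ in two dimensions, and then verify nonvanishing of $\beta_{ijk}$ via the barycentric relation $d\lambda_i+d\lambda_j+d\lambda_k=0$, which collapses all three summands to the same term. The only cosmetic difference is that the paper writes the result as $3\omega\odot\omega$ rather than your equivalent $6\mu\otimes\mu$.
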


\begin{proof}
On a triangle, the space $ \Lambda^2(T_{ijk}) $ of constant $2$-forms is one-dimensional and is equal to its trace-free subspace:
\[
\Lambda^2(T_{ijk}) = \mathring{\Lambda}^2(T_{ijk}),
\]
since every $2$-form vanishes on the edges. It follows that
\[
\Lambda^{2,2}_{\sym}(T_{ijk}) = \mathring{\Lambda}_{\mathrm{sym}}^{2,2}(T_{ijk}),
\]
is also one-dimensional. Furthermore, since any four-form is zero in two dimensions, it follows that
\[\mathring\Lambda^{2,2}_0(T_{ijk})= \mathring{\Lambda}_{\mathrm{sym}}^{2,2}(T_{ijk}).\]

So, it remains to show that $\beta_{ijk}$ is nonzero on the triangle $T_{ijk}$. On $T_{ijk}$, we have the identity,
\[
d\lambda_i + d\lambda_j + d\lambda_k = 0,
\]
from which it follows that
\[
d\lambda_{ij} = d\lambda_{jk} = d\lambda_{ki}.
\]
Let us denote this nonzero 2-form  $\omega$. It follows that, on $T_{ijk}$
\[
\beta_{ijk} = d\lambda_{ij} \odot d\lambda_{jk}
+ d\lambda_{jk} \odot d\lambda_{ki}
+ d\lambda_{ki} \odot d\lambda_{ij} = 3\omega \odot \omega \ne 0,
\]
and therefore spans $ \mathring{\Lambda}_0^{2,2}(T_{ijk}) $.
\end{proof}

\begin{proposition}
\label{prop:trace_surjective}
Let $T_{ijkl}$ be the tetrahedral face spanned by vertices $i,j,k,l$.  
We define $\mathring{\Lambda}^{2,2}_0(T_{ijkl})$ to be the subspace of $\Lambda^{2,2}_0(T_{ijkl})$ consisting of symmetric double $2$-forms vanishing on the boundary of the tetrahedron.

The trace map
\[
\tr : \Lambda_0^{2,2}(T_{ijkl}) \longrightarrow 
\Lambda_0^{2,2}(T_{ijk}) \oplus 
\Lambda_0^{2,2}(T_{ijl}) \oplus 
\Lambda_0^{2,2}(T_{ikl}) \oplus 
\Lambda_0^{2,2}(T_{jkl})
\]
is surjective. Consequently, the space $\mathring{\Lambda}_{0}^{2,2}(T_{ijkl})$ is two-dimensional.
\end{proposition}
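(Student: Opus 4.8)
The plan is to prove surjectivity of the trace map and then deduce the dimension of $\mathring\Lambda_0^{2,2}(T_{ijkl})$ from the resulting exact sequence. First I would pin down the dimensions of the relevant spaces by Corollary~\ref{corollary:dimension_counting}. On the tetrahedron $T_{ijkl}$ (which is $3$-dimensional), the formula gives $\dim\Lambda_0^{2,2}(T_{ijkl}) = \frac{1}{12}\cdot 9\cdot 4\cdot 2 = 6$. Each triangular face $T_{\bullet}$ is $2$-dimensional, and on a triangle Lemma~\ref{lemma:2d_mathring} (together with the computation in its proof) shows $\Lambda_0^{2,2}(T_{\bullet}) = \mathring\Lambda_0^{2,2}(T_{\bullet})$ is one-dimensional. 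Hence the target of the trace map is a direct sum of four one-dimensional spaces, so it has dimension $4$.

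Granting surjectivity, the kernel of the trace map is exactly $\mathring\Lambda_0^{2,2}(T_{ijkl})$, the symmetric double $2$-forms vanishing on the whole boundary; by rank–nullity its dimension is $6-4=2$, which gives the final claim. So the crux is establishing surjectivity. The natural approach is to exhibit explicit preimages using the $\beta$-forms. By Lemma~\ref{lemma:vanishing_faces}, $\beta_{ijk}$ vanishes on any face not containing $T_{ijk}$, and by Lemma~\ref{lemma:2d_mathring} its trace on $T_{ijk}$ is the nonzero generator $3\omega\odot\omega$ of $\Lambda_0^{2,2}(T_{ijk})$. Thus $\tr(\beta_{ijk})$ lands in the first summand only (nonzero there, zero on the other three triangles), and similarly $\beta_{ijl}$, $\beta_{ikl}$, $\beta_{jkl}$ hit the remaining three summands. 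These four traces are supported on distinct coordinate summands and are each nonzero, so they span the entire four-dimensional target, proving surjectivity.

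I would then verify carefully that the four $\beta$-forms indeed live in the domain $\Lambda_0^{2,2}(T_{ijkl})$, which is immediate from Proposition~\ref{prop:beta_gamma_kernel}, and that restricting to a triangle commutes with the trace in the expected way so that the off-diagonal traces genuinely vanish; this is exactly the content of Lemma~\ref{lemma:vanishing_faces}. The dimension computation on the tetrahedron should be stated explicitly to make the rank–nullity conclusion airtight.

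The step I expect to be the main obstacle is not surjectivity itself, which follows cleanly from the $\beta$-forms, but rather confirming that $\dim\Lambda_0^{2,2}(T_{ijkl}) = 6$ is being applied correctly: one must remember that the simplex $T_{ijkl}$ is genuinely three-dimensional as a manifold, so the ambient dimension $n$ in the formula of Corollary~\ref{corollary:dimension_counting} is $3$, not $n+1$. Once that bookkeeping is correct, the exact sequence
\[
0 \longrightarrow \mathring\Lambda_0^{2,2}(T_{ijkl}) \longrightarrow \Lambda_0^{2,2}(T_{ijkl}) \xrightarrow{\ \tr\ } \bigoplus \Lambda_0^{2,2}(T_{\bullet}) \longrightarrow 0
\]
gives $\dim\mathring\Lambda_0^{2,2}(T_{ijkl}) = 6 - 4 = 2$.
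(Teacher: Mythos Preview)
Your proposal is correct and follows essentially the same approach as the paper: exhibit the four $\beta$-forms as explicit preimages hitting each one-dimensional summand of the target (using Lemma~\ref{lemma:vanishing_faces} and Lemma~\ref{lemma:2d_mathring}), then apply rank--nullity with $\dim\Lambda_0^{2,2}(T_{ijkl})=6$. The only cosmetic difference is that the paper obtains the dimension $6$ by noting $\Lambda^4=0$ in three dimensions so $\Lambda_0^{2,2}(T_{ijkl})=\Lambda_{\sym}^{2,2}(T_{ijkl})$, whereas you invoke Corollary~\ref{corollary:dimension_counting} directly with $n=3$; these are the same computation.
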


\begin{proof}
Consider the four forms
\[
\beta_{ijk}, \quad \beta_{ijl}, \quad \beta_{ikl}, \quad \beta_{jkl} \in \Lambda_{0}^{2,2}(T_{ijkl}).
\]
By Lemma~\ref{lemma:2d_mathring}, when restricted to $T_{ijk}$, the form $\beta_{ijk}$ spans the one-dimensional space $ \Lambda_0^{2,2}(T_{ijk}) $.  By Lemma ~\ref{lemma:vanishing_faces}, $\beta_{ijk}$ vanishes on any face that does not contain the triangle $T_{ijk}$. Similarly, $\beta_{ijl},\beta_{ikl},$ and $\beta_{jkl}$ vanish on all triangular faces other than $T_{ijl},T_{ikl},$ and $T_{jkl}$, respectively.

Therefore, applying the trace map, we have
\[
\tr(\beta_{ijk}) = (\beta_{ijk}, 0, 0, 0), \quad
\tr(\beta_{ijl}) = (0, \beta_{ijl}, 0, 0), \quad \text{etc}.
\]
Since the image of each $\beta_{ijk}$ spans the double forms on its associated face, it follows that the trace map is surjective.

Since four-forms are zero in three dimensions, $\Lambda^{2,2}_0(T_{ijkl})=\Lambda^{2,2}_{\sym}(T_{ijkl})$, which has dimension $6$. Since the domain of the trace map has dimension $6$ and the codomain has dimension $ 4 $, the kernel is $2$-dimensional.
\end{proof}

\begin{lemma}
\label{lemma:gamma_independence}
On a tetrahedron with vertices $i,j,k,l$, the forms $\gamma_{iklj}$ and $\gamma_{iljk}$ are linearly independent.
Consequently, they are a basis for $\mathring{\Lambda}_{0}^{2,2}(T_{ijkl}).$ 

\end{lemma}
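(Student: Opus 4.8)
The plan is to establish linear independence of $\gamma_{iklj}$ and $\gamma_{iljk}$ and then invoke Proposition~\ref{prop:trace_surjective}, which tells us that $\mathring{\Lambda}_0^{2,2}(T_{ijkl})$ is exactly two-dimensional. Since both $\gamma$-forms lie in $\Lambda_0^{2,2}(T_{ijkl})$ by Proposition~\ref{prop:beta_gamma_kernel}, and they vanish on every proper face by Lemma~\ref{lemma:vanishing_faces} (each index appears in both forms, so each face of the tetrahedron omits some vertex whose $d\lambda$ factor kills the form), they both belong to $\mathring{\Lambda}_0^{2,2}(T_{ijkl})$. Thus two linearly independent elements of a two-dimensional space automatically form a basis, which gives the ``consequently'' clause for free.

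First I would reduce to a concrete computation on the tetrahedron. On $T_{ijkl}$, the relation $\sum_m d\lambda_m = 0$ lets me eliminate one differential, so I can work in the three-dimensional space $\Lambda^1(T_{ijkl})$ spanned by, say, $d\lambda_i, d\lambda_j, d\lambda_k$ (with $d\lambda_l = -(d\lambda_i + d\lambda_j + d\lambda_k)$). Then $\Lambda^2(T_{ijkl})$ is three-dimensional, spanned by $d\lambda_{ij}, d\lambda_{ik}, d\lambda_{jk}$, and $\Lambda^{2,2}_{\sym}(T_{ijkl})$ is six-dimensional with a basis of symmetric products of these. My approach is to expand $\gamma_{iklj} = d\lambda_{il}\odot d\lambda_{jk} - d\lambda_{ij}\odot d\lambda_{kl}$ and $\gamma_{iljk} = d\lambda_{ij}\odot d\lambda_{kl} - d\lambda_{ik}\odot d\lambda_{lj}$ (reading off the definition $\gamma_{abcd} = d\lambda_{ac}\odot d\lambda_{db} - d\lambda_{ad}\odot d\lambda_{bc}$) in terms of this basis after substituting for $d\lambda_l$, and then check that the two resulting coordinate vectors are not scalar multiples of each other.

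The cleanest way to certify independence is to exhibit a pairing or evaluation that separates the two forms. Concretely, I would evaluate each $\gamma$ on suitable $4$-tuples of the basis tangent vectors dual to the $\lambda$'s, or equivalently extract a single convenient coefficient from each coordinate vector, and show the $2\times 2$ matrix of these values is nonsingular. For instance, after expansion one finds that the coefficient of $d\lambda_{ij}\odot d\lambda_{ik}$ distinguishes them from the coefficient of $d\lambda_{jk}\odot d\lambda_{jk}$, yielding a nonzero determinant; any such nondegenerate $2\times2$ minor suffices.

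The main obstacle I anticipate is purely bookkeeping: correctly expanding $d\lambda_{kl}$, $d\lambda_{jl}$, and $d\lambda_{il}$ via the substitution $d\lambda_l = -(d\lambda_i+d\lambda_j+d\lambda_k)$ and then carefully collecting the symmetric-product terms without sign errors, since the $\odot$ is symmetric but the underlying $2$-forms carry orientation signs from the wedge. Rather than grind through all six coordinates of each vector, I would only track enough coordinates to fill out a provably nonsingular $2\times2$ submatrix, which keeps the computation short and makes the independence manifest. Once independence is in hand, the dimension count from Proposition~\ref{prop:trace_surjective} closes the argument immediately.
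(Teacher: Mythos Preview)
Your proposal is correct and follows essentially the same approach as the paper: evaluate the two $\gamma$-forms on well-chosen tuples of tangent vectors to produce a nonsingular $2\times2$ matrix, then invoke Proposition~\ref{prop:beta_gamma_kernel}, Lemma~\ref{lemma:vanishing_faces}, and Proposition~\ref{prop:trace_surjective} to conclude they form a basis. The paper carries this out with the specific choices $V_1=e_i-e_l$, $W_1=e_j-e_k$, $V_2=e_i-e_k$, $W_2=e_l-e_j$, evaluating each $\gamma$ on $(V_a,W_a;V_a,W_a)$ for $a=1,2$ to obtain the matrix $\bigl(\begin{smallmatrix}2&-4\\-4&2\end{smallmatrix}\bigr)$.
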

\begin{proof}

From Definition~\ref{def:betagamma}, we have that
\[
\gamma_{iklj} := d\lambda_{il}\odot d\lambda_{jk} - d\lambda_{ij}\odot d\lambda_{kl}, \qquad
\gamma_{iljk} := d\lambda_{ij}\odot d\lambda_{kl}-d\lambda_{ik}\odot d\lambda_{lj}.
\]
We choose vectors tangent to the tetrahedron
\[
V_1 = e_i - e_l,\quad W_1 = e_j - e_k,\quad V_2 = e_i - e_k,\quad W_2 = e_l - e_j.
\]
We plug these vectors into each term of the $\gamma$-forms, obtaining
\begin{align*}
(d\lambda_{ij} \odot d\lambda_{kl})(V_1, W_1; V_1, W_1) &= -2, &\quad
(d\lambda_{ij} \odot d\lambda_{kl})(V_2, W_2; V_2, W_2) &= 2, \\
(d\lambda_{il} \odot d\lambda_{jk})(V_1, W_1; V_1, W_1) &= 0,  &\quad
(d\lambda_{il} \odot d\lambda_{jk})(V_2, W_2; V_2, W_2) &= -2, \\
(d\lambda_{ik} \odot d\lambda_{lj})(V_1, W_1; V_1, W_1) &= 2,  &\quad
(d\lambda_{ik} \odot d\lambda_{lj})(V_2, W_2; V_2, W_2) &= 0.
\end{align*}
It follows that
\begin{align*}
\gamma_{iklj}(V_1, W_1; V_1, W_1) &= 2,  &\quad
\gamma_{iklj}(V_2, W_2; V_2, W_2) &= -4, \\
\gamma_{iljk}(V_1, W_1; V_1, W_1) &= -4, &\quad
\gamma_{iljk}(V_2, W_2; V_2, W_2) &= 2.
\end{align*}

Since the vectors $\begin{pmatrix}2&-4 
\end{pmatrix}$ and $\begin{pmatrix}
    -4&2
\end{pmatrix}$ are linearly independent, it follows that the $\gamma$-forms are linearly independent on $T_{ijkl}$. By Proposition~\ref{prop:beta_gamma_kernel}, both forms lie in $\Lambda_0^{2,2}$, and by Lemma~\ref{lemma:vanishing_faces}, they vanish on all two-dimensional faces of the tetrahedron. Therefore, they are elements of the subspace $\mathring{\Lambda}_{0}^{2,2}(T_{ijkl})$. Finally, by Proposition~\ref{prop:trace_surjective}, this space has dimension two, so $\gamma_{iklj}$ and $\gamma_{iljk}$ form a basis for this space.
\end{proof}

We are now ready to write down a geometrically decomposed basis for $\Lambda^{2,2}_0(T^n)$.

\begin{remark}
We will use the prefix $\mathcal B$ to denote our basis of the corresponding space. Note, however, that Hu and Lin \cite{hu2025finite} use this symbol to denote bubble spaces, that is, spaces of forms with vanishing trace. In our paper, the bubble spaces are denoted with a ring.
\end{remark}

\begin{definition}
For $T^n$ an $n$-dimensional simplex, let
\begin{multline*}
\mathcal{B}\Lambda_0^{2,2}(T^n) \\:= 
\bigl\{ \beta_{ijk} : 0\le i<j<k\le n \bigr\} \cup
\bigl\{ \gamma_{iklj}, \gamma_{iljk} : 0\le i<j<k<l\le n \bigr\}.
\end{multline*}
\end{definition}

We will show that $\mathcal B\Lambda_0^{2,2}(T^n)$ is a basis for $\Lambda_0^{2,2}(T^n)$.

\begin{proposition}
\label{prop:basis_independence}
The set
$
\mathcal{B}\Lambda_0^{2,2}(T^n)
$
is linearly independent.
\end{proposition}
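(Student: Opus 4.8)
The plan is to prove linear independence by exploiting the geometric localization of the basis elements established in Lemma~\ref{lemma:vanishing_faces}. Suppose we have a vanishing linear combination
\[
\sum_{\{i,j,k\}} a_{ijk}\,\beta_{ijk}
+ \sum_{\{i,j,k,l\}}\bigl( b_{ijkl}\,\gamma_{iklj} + c_{ijkl}\,\gamma_{iljk}\bigr) = 0
\]
as a double form on $T^n$, where the first sum runs over triples $0\le i<j<k\le n$ and the second over quadruples $0\le i<j<k<l\le n$. I want to show that all coefficients $a_{ijk}, b_{ijkl}, c_{ijkl}$ vanish. The strategy is to trace this identity onto faces of increasing dimension, starting with triangles, and to use the fact that each basis element vanishes on any face not containing its associated simplex.

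First I would take the trace onto an arbitrary triangular face $T_{pqr}$. By Lemma~\ref{lemma:vanishing_faces}, every $\beta_{ijk}$ with $\{i,j,k\}\ne\{p,q,r\}$ vanishes on $T_{pqr}$, and every $\gamma_{ijkl}$ vanishes on $T_{pqr}$ as well, since a triangle cannot contain a tetrahedral face. Hence the trace of the full combination onto $T_{pqr}$ reduces to $a_{pqr}\,\tr_{T_{pqr}}\beta_{pqr}$. By Lemma~\ref{lemma:2d_mathring}, $\beta_{pqr}$ is nonzero on $T_{pqr}$, so we conclude $a_{pqr}=0$. Ranging over all triangles forces every $\beta$-coefficient to vanish.

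With all $a_{ijk}=0$, the remaining identity involves only $\gamma$-forms, and I would next trace onto an arbitrary tetrahedral face $T_{pqrs}$. Again by Lemma~\ref{lemma:vanishing_faces}, every $\gamma_{ijkl}$ associated to a quadruple other than $\{p,q,r,s\}$ vanishes on $T_{pqrs}$, so the trace collapses to $b_{pqrs}\,\tr_{T_{pqrs}}\gamma_{prsq} + c_{pqrs}\,\tr_{T_{pqrs}}\gamma_{psqr}$ (up to the index-ordering conventions of the basis). By Lemma~\ref{lemma:gamma_independence}, the two $\gamma$-forms associated to a tetrahedron are linearly independent on that tetrahedron, so both $b_{pqrs}$ and $c_{pqrs}$ must vanish. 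Ranging over all tetrahedra eliminates the remaining coefficients, completing the proof.

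I expect the only real subtlety to be bookkeeping rather than mathematics: one must take care that the basis is indexed by the specific orderings $\gamma_{iklj}, \gamma_{iljk}$ with $i<j<k<l$, so when restricting to a given tetrahedron $T_{pqrs}$ the two surviving terms really are the two independent $\gamma$-forms identified in Lemma~\ref{lemma:gamma_independence} and not some other pair or a cyclic combination. The cyclic identity $\gamma_{ijkl}+\gamma_{iklj}+\gamma_{iljk}=0$ and the symmetry relations among the $\gamma$-forms guarantee that these two orderings span the full two-dimensional space $\mathring\Lambda^{2,2}_0(T_{pqrs})$, so the localization argument goes through cleanly once the conventions are matched up.
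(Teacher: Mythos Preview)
Your proof is correct and follows essentially the same approach as the paper: trace to each triangle to kill the $\beta$-coefficients using Lemmas~\ref{lemma:vanishing_faces} and~\ref{lemma:2d_mathring}, then trace to each tetrahedron to kill the $\gamma$-coefficients using Lemmas~\ref{lemma:vanishing_faces} and~\ref{lemma:gamma_independence}. The only differences are notational, and your closing remark about matching the index-ordering conventions with Lemma~\ref{lemma:gamma_independence} is a sensible sanity check that the paper leaves implicit.
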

\begin{proof}
Suppose
\[
\sum_{i<j<k} c_{ijk}  \beta_{ijk}
+
\sum_{i<j<k<l} \left( a_{ijkl}  \gamma_{iklj} + b_{ijkl}  \gamma_{iljk} \right)
= 0.
\]

Now, we can take the trace of this identity to a 2-dimensional face $T_{ijk}$. By Lemma~\ref{lemma:vanishing_faces}, each $\gamma$-form vanishes on every 2-dimensional face, and each $\beta_{i'j'k'}$ vanishes on $T_{ijk}$ unless $\{i',j',k'\}=\{i,j,k\}$, in which case its trace is nonzero by Lemma~\ref{lemma:2d_mathring}. It follows that $c_{ijk} = 0$, and this holds for every $i<j<k$.

Therefore, we have
\[
\sum_{i<j<k<l} \left( a_{ijkl}  \gamma_{iklj} + b_{ijkl}  \gamma_{iljk} \right) = 0.
\]
Similarly, we can take the trace to a 3-dimensional face $T_{ijkl}$. The forms $\gamma_{iklj}$ and $\gamma_{iljk}$ are linearly independent on $T_{ijkl}$ by Lemma~\ref{lemma:gamma_independence}, and every other $\gamma$-form in the sum vanishes on $T_{ijkl}$ by Lemma~\ref{lemma:vanishing_faces}, so $a_{ijkl} = b_{ijkl} = 0$ for all $i<j<k<l$.

All of the coefficients vanish, so the set $\mathcal{B}\Lambda_0^{2,2}(T^n)$ is linearly independent.
\end{proof}

\begin{proposition}
\label{prop:basis_count}
The set $\mathcal{B}\Lambda_{0}^{2,2}(T^n)$ has $\frac1{12}n^2(n+1)(n-1)$ elements.
\end{proposition}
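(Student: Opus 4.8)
The plan is to count the two families of basis elements separately according to their indexing sets and then combine them with a short algebraic simplification. The elements of $\mathcal{B}\Lambda_0^{2,2}(T^n)$ come in two types: the beta forms $\beta_{ijk}$, indexed by triples $0\le i<j<k\le n$, and the gamma forms, which contribute the two elements $\gamma_{iklj},\gamma_{iljk}$ for each quadruple $0\le i<j<k<l\le n$.

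First I would count the beta forms. The vertices of $T^n$ are indexed by the $(n+1)$-element set $\{0,1,\dots,n\}$, so the strictly increasing triples $(i,j,k)$ are in bijection with the $3$-element subsets of this set, giving $\binom{n+1}{3}$ beta forms. Next I would count the gamma forms. The strictly increasing quadruples $(i,j,k,l)$ are in bijection with the $4$-element subsets of $\{0,\dots,n\}$, of which there are $\binom{n+1}{4}$, and each such subset contributes \emph{two} basis elements, yielding $2\binom{n+1}{4}$ gamma forms in total.

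The final step is to add these counts and simplify:
\begin{align*}
\binom{n+1}{3} + 2\binom{n+1}{4}
&= \frac{(n+1)n(n-1)}{6} + \frac{(n+1)n(n-1)(n-2)}{12} \\
&= \frac{(n+1)n(n-1)}{12}\bigl(2 + (n-2)\bigr) \\
&= \tfrac{1}{12}n^2(n+1)(n-1),
\end{align*}
as claimed.

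There is essentially no obstacle here; the argument is purely combinatorial, and the only point requiring care is the bookkeeping of two gamma forms per tetrahedron rather than one. It is worth noting that this count agrees with $\dim\Lambda_0^{2,2}$ computed in Corollary~\ref{corollary:dimension_counting}. Combined with the linear independence established in Proposition~\ref{prop:basis_independence}, this matching of cardinality with dimension confirms that $\mathcal{B}\Lambda_0^{2,2}(T^n)$ is genuinely a basis for $\Lambda_0^{2,2}(T^n)$, and not merely a linearly independent family.
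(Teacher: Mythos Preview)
Your proof is correct and follows essentially the same approach as the paper: count $\binom{n+1}{3}$ beta forms and $2\binom{n+1}{4}$ gamma forms, then factor and simplify to obtain $\tfrac{1}{12}n^2(n+1)(n-1)$. Your closing remark connecting this count to Corollary~\ref{corollary:dimension_counting} and Proposition~\ref{prop:basis_independence} is an extra observation beyond the paper's proof, anticipating what the paper states separately as Proposition~\ref{prop:basis-constant}.
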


\begin{proof}
The set $\mathcal{B}\Lambda_0^{2,2}(T^n)$ has one beta-form associated with each 2-dimensional face and two gamma-forms associated with each 3-dimensional face, so
\begin{align*}
\left\lvert\mathcal{B}\Lambda_{0}^{2,2}(T^n)\right\rvert&=\binom{n+1}{3} + 2\binom{n+1}{4}\\
&= \tfrac1{12}(n+1)n(n-1)\bigl(2 + (n-2)\bigr) 
= \tfrac{1}{12}n^2(n+1)(n-1).\qedhere
\end{align*}
\end{proof}

\begin{proposition}
\label{prop:basis-constant}
The set
\[
\mathcal{B}\Lambda_0^{2,2}(T^n) = 
\bigl\{ \beta_{ijk} : i<j<k \bigr\} \cup
\bigl\{ \gamma_{iklj}, \gamma_{iljk} : i<j<k<l \bigr\},
\]
where $\{i,j,k,l\}\subseteq\{0,\dots,n\}$,
is a basis for $\Lambda_0^{2,2}(T^n)$.
\end{proposition}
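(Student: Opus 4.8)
The plan is to combine the two preceding propositions with the dimension count from \Cref{corollary:dimension_counting}. The strategy is the standard one for establishing that a linearly independent set is a basis: show that its cardinality equals the dimension of the ambient space, so that independence forces it to span.

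First I would invoke \Cref{prop:basis_independence}, which establishes that $\mathcal{B}\Lambda_0^{2,2}(T^n)$ is linearly independent. Next I would invoke \Cref{prop:basis_count}, which shows that this set has exactly $\tfrac{1}{12}n^2(n+1)(n-1)$ elements. Finally, I would compare this count against \Cref{corollary:dimension_counting}, which gives
\[
\dim\Lambda_0^{2,2}(T^n) = \tfrac{1}{12}n^2(n+1)(n-1).
\]
Here one must be slightly careful: the corollary computes the dimension of $\Lambda_0^{2,2}$ as a space of double forms on an $n$-dimensional vector space, whereas $T^n$ lives in $\mathbb{R}^{n+1}$. Since the relevant algebraic structure is that of double forms on the $n$-dimensional tangent space to the simplex, the dimension formula applies with the value $n$, and the three quantities agree. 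Because the number of elements in a linearly independent set cannot exceed the dimension, and here it equals the dimension, the set must be a maximal linearly independent set and hence a basis.

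The only subtlety — the main thing to get right rather than a genuine obstacle — is ensuring that the dimension formula in \Cref{corollary:dimension_counting} is applied to the correct underlying dimension, matching the $n$ appearing in the cardinality count of \Cref{prop:basis_count}. Once that bookkeeping is confirmed, the proof is immediate: a linearly independent set whose size equals the dimension of the space spans it. I would therefore write only a few sentences assembling these three facts, with no further computation required.
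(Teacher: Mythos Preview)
Your proposal is correct and matches the paper's own proof essentially line for line: the paper simply cites \Cref{prop:basis_independence} for linear independence, then \Cref{corollary:dimension_counting} and \Cref{prop:basis_count} to see that the cardinality equals $\dim\Lambda_0^{2,2}(T^n)$, and concludes. Your added remark about applying the dimension formula to the $n$-dimensional tangent space of $T^n$ rather than to $\mathbb{R}^{n+1}$ is a helpful clarification that the paper leaves implicit.
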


\begin{proof}
By Proposition~\ref{prop:basis_independence}, the set $\mathcal{B}\Lambda_0^{2,2}(T^n)$ is linearly independent.  
By Corollary~\ref{corollary:dimension_counting} and Proposition~\ref{prop:basis_count}, its cardinality equals $\dim\Lambda_0^{2,2}(T^n)$.  
Hence it is a basis.
\end{proof}

Applying Proposition~\ref{prop:basis-constant} to a face $F$ of $T^n$, we obtain the following corollary. Recall that $I(F)$ denotes the set of vertices of $F$.

\begin{corollary}\label{cor:basis_face_constant}
Let $F$ be a face of $T^n$. Then a basis of $\Lambda_{0}^{2,2}(F)$ is
\[
\mathcal B\Lambda_{0}^{2,2}(F)
:=
\bigl\{ \beta_{ijk} : i<j<k,\bigr\}
\ \cup\
\bigl\{ \gamma_{iklj},\ \gamma_{iljk} : i<j<k<l\bigr\},
\]
where $\{i,j,k,l\}\subseteq I(F)$. 
\end{corollary}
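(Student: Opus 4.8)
The plan is to reduce immediately to Proposition~\ref{prop:basis-constant} by observing that any face $F$ of $T^n$ is itself a simplex. Concretely, if $F$ has vertex set $I(F)$, then $F$ is an $m$-dimensional simplex with $m = \abs{I(F)} - 1$, and its barycentric coordinate functions are precisely the restrictions $\lambda_i|_F$ for $i \in I(F)$. Since Proposition~\ref{prop:basis-constant} was proved for an arbitrary simplex (its proof nowhere uses that $T^n$ is the \emph{full} ambient simplex, only the combinatorics of its faces together with Lemmas~\ref{lemma:2d_mathring} and~\ref{lemma:gamma_independence}, which are intrinsic to triangular and tetrahedral faces), it applies verbatim to $F$.

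The one point requiring care is the identification of the forms. First I would check that the intrinsic $\beta$ and $\gamma$ forms on $F$ --- those built via Definition~\ref{def:betagamma} from the barycentric coordinates $\lambda_i|_F$ of $F$ --- coincide with the forms $\beta_{ijk}$ and $\gamma_{iklj}$ on $T^n$ restricted to $F$. This holds because each such form is a fixed polynomial expression in the $d\lambda_i$ with $i \in I(F)$, and the differential $d(\lambda_i|_F)$ equals $\tr_F(d\lambda_i)$; hence tracing commutes with the wedge products and symmetric tensor products appearing in the definitions of $\beta_{ijk}$ and $\gamma_{iklj}$.

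With this identification in hand, I would then apply Proposition~\ref{prop:basis-constant} to the simplex $F$, indexing by $I(F)$ in place of $\{0,\dots,n\}$. This yields directly that
\[
\bigl\{ \beta_{ijk} : i<j<k \bigr\} \cup \bigl\{ \gamma_{iklj},\ \gamma_{iljk} : i<j<k<l \bigr\},
\]
with indices ranging over $I(F)$, is a basis for $\Lambda_0^{2,2}(F)$, which is exactly the claimed statement.

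I do not expect any genuine obstacle; the content is entirely bookkeeping. The only conceptual step is recognizing that a face of a simplex is a simplex, so that the constant-coefficient theory developed for $T^n$ transfers unchanged. The routine verification that barycentric coordinates restrict to barycentric coordinates, and that the defining expressions for the $\beta$- and $\gamma$-forms are preserved under this restriction, is the mild technical point to record.
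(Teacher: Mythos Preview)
Your proposal is correct and matches the paper's approach exactly: the paper simply states that the corollary follows by applying Proposition~\ref{prop:basis-constant} to the face $F$, and gives no further proof. Your additional remarks about the barycentric coordinates of $F$ being the restrictions of those of $T^n$, and the resulting identification of the $\beta$- and $\gamma$-forms, make explicit precisely the bookkeeping that the paper leaves implicit.
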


Note that, in the above corollary, we view the $\beta_{ijk}$, $\gamma_{iklj}$, and $\gamma_{iljk}$ as double forms on $F$, not $T^n$. We make the relationship more explicit in the next corollary. We first establish some notation.

\begin{notation}\label{not:Ibetagamma}
For a basis element $\psi\in\mathcal{B}\Lambda_0^{2,2}(T^n)$ we denote by $I(\psi)$ its set of associated indices:
\[
I(\beta_{ijk})=\{i,j,k\}, \qquad
I(\gamma_{iklj})=I(\gamma_{iljk})=\{i,j,k,l\}.
\]
\end{notation}

\begin{corollary}\label{cor:trace_basis_face}
Let $F$ be a face of $T^n$.  
Then
\[
\mathcal B\Lambda_{0}^{2,2}(F)
= \{\tr_F\psi : \psi \in \mathcal B\Lambda_{0}^{2,2}(T^n),\ I(\psi)\subseteq I(F)\}.
\]
\end{corollary}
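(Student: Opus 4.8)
The plan is to show that taking the trace to $F$ commutes with the construction of the $\beta$- and $\gamma$-forms, so that the basis elements of $\mathcal B\Lambda_0^{2,2}(T^n)$ indexed by subsets of $I(F)$ restrict exactly to the intrinsically defined forms of Corollary~\ref{cor:basis_face_constant}. The starting point is the compatibility of barycentric coordinates under restriction. For a vertex $i\in I(F)$, the barycentric coordinate $\lambda_i$ of $T^n$ restricts to the barycentric coordinate of $F$: indeed $\lambda_i=0$ on $F$ whenever $i\notin I(F)$, so $\sum_{i\in I(F)}\lambda_i|_F=1$ while each $\lambda_i|_F$ is nonnegative, which characterizes the barycentric coordinates of $F$. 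Since pullback commutes with the exterior derivative, it follows that $\tr_F(d\lambda_i)=d\lambda_i$ on $F$ for every $i\in I(F)$.

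Next I would use that the trace map, being the pullback along the inclusion $F\hookrightarrow T^n$, is a homomorphism for both the wedge product and the symmetric tensor product, i.e.\ $\tr_F(\alpha\wedge\beta)=\tr_F\alpha\wedge\tr_F\beta$ and $\tr_F(\alpha\odot\beta)=\tr_F\alpha\odot\tr_F\beta$. Combining this with the previous step, applying $\tr_F$ to the expression in Definition~\ref{def:betagamma} defining any $\psi\in\mathcal B\Lambda_0^{2,2}(T^n)$ with $I(\psi)\subseteq I(F)$ reproduces the very same combination of barycentric differentials, now read off on $F$. Concretely, $\tr_F(\beta_{ijk})=\beta_{ijk}$ whenever $\{i,j,k\}\subseteq I(F)$, and $\tr_F(\gamma_{iklj})=\gamma_{iklj}$, $\tr_F(\gamma_{iljk})=\gamma_{iljk}$ whenever $\{i,j,k,l\}\subseteq I(F)$, where the right-hand sides are the intrinsic forms on $F$.

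Finally I would match the two sets. By Notation~\ref{not:Ibetagamma}, the condition $I(\psi)\subseteq I(F)$ selects exactly the $\beta_{ijk}$ with $\{i,j,k\}\subseteq I(F)$ together with the $\gamma_{iklj},\gamma_{iljk}$ with $\{i,j,k,l\}\subseteq I(F)$, and under $\tr_F$ these become precisely the elements Corollary~\ref{cor:basis_face_constant} lists as $\mathcal B\Lambda_0^{2,2}(F)$. Hence $\{\tr_F\psi:\psi\in\mathcal B\Lambda_0^{2,2}(T^n),\ I(\psi)\subseteq I(F)\}$ both lands in and exhausts $\mathcal B\Lambda_0^{2,2}(F)$, giving the asserted equality. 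I would also remark in passing that the basis elements $\psi$ with $I(\psi)\not\subseteq I(F)$ are correctly excluded, since they trace to zero on $F$ by Lemma~\ref{lemma:vanishing_faces}.

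There is no deep difficulty here; the single point requiring care is the identification $\tr_F(d\lambda_i)=d\lambda_i$, namely that the ambient barycentric coordinate restricts to the face's own barycentric coordinate rather than to some other affine function. Once that is established, the statement is pure naturality of pullback with respect to $\wedge$ and $\odot$, with the additional bookkeeping observation that the traced forms equal the intrinsic basis forms exactly (coefficient one), not merely up to a nonzero scalar, because $\tr_F$ reproduces the defining linear combination verbatim.
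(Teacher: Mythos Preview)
Your proposal is correct and follows the same reasoning the paper leaves implicit: the corollary is stated without proof, immediately after Corollary~\ref{cor:basis_face_constant}, with the understanding that the barycentric $\beta$- and $\gamma$-forms on $T^n$ restrict verbatim to those on $F$ because $\tr_F(d\lambda_i)=d\lambda_i$ for $i\in I(F)$ and pullback respects $\wedge$ and $\odot$. You have simply spelled out the details the paper takes for granted.
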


\section{Polynomial coefficient spaces and geometric decomposition}\label{sec:poly}
We now move on to double forms with polynomial coefficients. We begin with notation.

\begin{definition}
    Let $\mathcal P_r\Lambda^{2,2}_0(T^n)$ denote the space of double forms on $T^n$ that are pointwise in $\Lambda^{2,2}_0(T^n)$ and have polynomial coefficients of degree at most $r$. Equivalently, $\mathcal P_r\Lambda^{2,2}_0$ is spanned by products of polynomials of degree at most $r$ and constant coefficient double forms in $\Lambda^{2,2}_0(T^n)$
\end{definition}

\begin{notation}
Let $\mathcal P_r(T^n)$ denote the space of polynomials on $T^n$ of degree at most $r$. Each such polynomial can be written as a \emph{homogeneous} polynomial of degree $r$ in the barycentric coordinates. A monomial of degree $r$ is indexed by a multi-index
\[
\alpha=(\alpha_0,\alpha_1,\dots,\alpha_n)\in\mathbb N_0^{n+1},
\qquad\abs\alpha=\alpha_0+\alpha_1+\dots+\alpha_n=r,
\]
and is written
\[
\lambda^\alpha:=\lambda_0^{\alpha_0}\lambda_1^{\alpha_1}\cdots\lambda_n^{\alpha_n}.
\]
The \emph{support} of $\alpha$ is $\supp(\alpha)=\{i:\alpha_i\neq0\}$.
\end{notation}

Observe that
    \[\mathcal P_r\Lambda_0^{2,2}(T^n)\cong\mathcal P_r(T^n)\otimes \Lambda_0^{2,2}(T^n).\]
The monomials are a basis for $\mathcal P_r(T^n)$, and Proposition~\ref{prop:basis-constant} gives a basis $\mathcal{B}\Lambda^{2,2}_0(T^n)$ for $\Lambda^{2,2}_0(T^n)$. Therefore, we immediately obtain a basis $\mathcal{BP}_r\Lambda^{2,2}_0(T^n)$ for $\mathcal P_r\Lambda^{2,2}_0(T^n)$.

\begin{proposition}
\label{prop:basis_polynomial}
For $r\ge 0$,
\[
\mathcal B \mathcal P_r\Lambda_{0}^{2,2}(T^n)
:=\{\lambda^\alpha\psi : \psi\in\mathcal B\Lambda_{0}^{2,2}(T^n),\ \alpha\in\mathbb N_0^{n+1},\ \abs\alpha=r\}
\]
is a basis of $\mathcal P_r\Lambda_{0}^{2,2}(T^n)$.
\end{proposition}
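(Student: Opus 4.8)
The plan is to treat this as the standard fact that the set of pairwise products of a basis of $\mathcal P_r(T^n)$ with a basis of $\Lambda_0^{2,2}(T^n)$ is a basis of the tensor product $\mathcal P_r(T^n)\otimes\Lambda_0^{2,2}(T^n)$, which is isomorphic to $\mathcal P_r\Lambda_0^{2,2}(T^n)$ as observed above. Under that isomorphism, the simple tensor $\lambda^\alpha\otimes\psi$ corresponds precisely to the product double form $\lambda^\alpha\psi$, so it suffices to exhibit bases for the two factors and then invoke this fact.

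For the polynomial factor, the monomials $\{\lambda^\alpha:\abs\alpha=r\}$ form a basis of $\mathcal P_r(T^n)$: by homogenizing with the relation $\sum_i\lambda_i=1$, every polynomial of degree at most $r$ is a linear combination of these degree-$r$ monomials in the barycentric coordinates, and there are $\binom{n+r}{n}=\dim\mathcal P_r(T^n)$ of them, so they are independent as well. For the double-form factor, Proposition~\ref{prop:basis-constant} gives that $\mathcal B\Lambda_0^{2,2}(T^n)$ is a basis of the constant-coefficient space $\Lambda_0^{2,2}(T^n)$.

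Rather than cite the tensor-product fact as a black box, I would spell out the two directions directly. Spanning is immediate from the definition of $\mathcal P_r\Lambda_0^{2,2}(T^n)$ as the span of products of degree-$\le r$ polynomials with constant forms in $\Lambda_0^{2,2}(T^n)$: expand each polynomial factor in the monomial basis and each form factor in $\mathcal B\Lambda_0^{2,2}(T^n)$. For independence, suppose $\sum_{\alpha,\psi}c_{\alpha,\psi}\lambda^\alpha\psi=0$ and regroup it as $\sum_{\psi}\bigl(\sum_\alpha c_{\alpha,\psi}\lambda^\alpha\bigr)\psi=0$. Since each $\psi$ is a constant double form and the elements of $\mathcal B\Lambda_0^{2,2}(T^n)$ are linearly independent in the finite-dimensional space $\Lambda_0^{2,2}(T^n)$, evaluating at an arbitrary point of $T^n$ forces each scalar coefficient function $\bigl(\sum_\alpha c_{\alpha,\psi}\lambda^\alpha\bigr)$ to vanish identically; the independence of the monomials then yields $c_{\alpha,\psi}=0$ for all $\alpha$ and $\psi$.

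There is no substantial obstacle here, as the argument is purely linear-algebraic once the two factor bases are in hand. The only point warranting care is the passage from ``the $\psi$ are linearly independent as constant double forms'' to ``linearly independent over polynomial coefficients,'' which is exactly why the pointwise evaluation step is used: the constancy of each $\psi$ lets a dependence relation be read off at every point of $T^n$ simultaneously, decoupling the form indices from the polynomial indices.
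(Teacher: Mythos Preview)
Your argument is correct and is essentially the paper's own: the paper simply observes just before the proposition that $\mathcal P_r\Lambda_0^{2,2}(T^n)\cong\mathcal P_r(T^n)\otimes\Lambda_0^{2,2}(T^n)$, that the degree-$r$ barycentric monomials form a basis of the first factor, and that Proposition~\ref{prop:basis-constant} supplies a basis of the second, so the product set is a basis. You have written out explicitly the spanning and pointwise-independence steps that the paper leaves implicit, but the route is the same.
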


Each basis element in $\mathcal{BP}_r\Lambda^{2,2}_0(T^n)$ is naturally associated to a subset of $\{0,\dots,n\}$.

\begin{notation}\label{not:Ipoly}
Recall in Notation~\ref{not:Ibetagamma}, we established that $I(\beta_{ijk})=\{i,j,k\}$ and $I(\gamma_{iklj})=I(\gamma_{iljk})=\{i,j,k,l\}$. Similarly, for a monomial $\lambda^\alpha$, we set
\[
I(\lambda^\alpha) := \supp(\alpha),
\]
and for $\varphi=\lambda^\alpha\psi$ with $\psi\in\mathcal B\Lambda_{0}^{2,2}(T^n)$, we set
\[
I(\varphi) := I(\lambda^\alpha)\cup I(\psi).
\]
Recall also that $I(F)$ denotes the set of vertices of a face $F$ of $T^n$.
\end{notation}

We thus have a natural correspondence between basis elements in $\mathcal{BP}_r\Lambda^{2,2}_0(T^n)$ and faces of $T^n$. As we will see later, this correspondence yields the geometric decomposition. 

For now, applying Proposition~\ref{prop:basis_polynomial} to a face $F$ of $T^n$, we obtain the following corollary.

\begin{corollary}\label{cor:basis_face_polynomial}
Let $F$ be a face of $T^n$.  
Then a basis of $\mathcal P_r\Lambda_{0}^{2,2}(F)$ is
\[
\mathcal B \mathcal P_r\Lambda_{0}^{2,2}(F)
:=\{\lambda^\alpha\psi : \psi\in\mathcal B\Lambda_{0}^{2,2}(F),\ 
\alpha\in\mathbb N_0^{n+1},\ \abs\alpha=r,\ \supp(\alpha)\subseteq I(F)\}.
\]
Equivalently,
\[
\mathcal B\mathcal P_r\Lambda_{0}^{2,2}(F)
=\bigl\{\tr_F\varphi : \varphi\in\mathcal B\mathcal P_r\Lambda_{0}^{2,2}(T^n),\ I(\varphi)\subseteq I(F)\bigr\}.
\]
\end{corollary}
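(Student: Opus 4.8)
The plan is to verify the two claimed descriptions of $\mathcal B\mathcal P_r\Lambda_0^{2,2}(F)$ separately: first that the displayed set is genuinely a basis, and then that it coincides with the set of traces from $T^n$.

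For the first assertion, I would simply apply Proposition~\ref{prop:basis_polynomial} \emph{to the face $F$ itself}. A face $F$ of $T^n$ is an $m$-dimensional simplex in its own right (where $m=\dim F$), with vertex set $I(F)$ and barycentric coordinates given by the restrictions $\{\lambda_i\}_{i\in I(F)}$; the remaining barycentric coordinates vanish on $F$. Proposition~\ref{prop:basis_polynomial} then says that $\mathcal B\Lambda_0^{2,2}(F)$ (from Corollary~\ref{cor:basis_face_constant}) multiplied by the degree-$r$ monomials on $F$ forms a basis of $\mathcal P_r\Lambda_0^{2,2}(F)$. The only translation needed is that ``monomials on $F$'' are precisely the $\lambda^\alpha$ with $|\alpha|=r$ and $\supp(\alpha)\subseteq I(F)$, since the barycentric coordinates indexed outside $I(F)$ are identically zero on $F$. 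This gives exactly the first displayed set.

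For the equivalence of the two descriptions, I would show that the two sets are equal term by term. Take any $\varphi=\lambda^\alpha\psi\in\mathcal B\mathcal P_r\Lambda_0^{2,2}(T^n)$ with $I(\varphi)\subseteq I(F)$. By Notation~\ref{not:Ipoly}, $I(\varphi)=I(\lambda^\alpha)\cup I(\psi)=\supp(\alpha)\cup I(\psi)$, so the condition $I(\varphi)\subseteq I(F)$ is equivalent to the two conditions $\supp(\alpha)\subseteq I(F)$ and $I(\psi)\subseteq I(F)$. The trace is multiplicative, $\tr_F(\lambda^\alpha\psi)=(\tr_F\lambda^\alpha)(\tr_F\psi)$, and $\tr_F\lambda^\alpha=\lambda^\alpha|_F$, which is the corresponding monomial on $F$ (unchanged in form, since $\supp(\alpha)\subseteq I(F)$ means no factor is being killed). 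By Corollary~\ref{cor:trace_basis_face}, the condition $I(\psi)\subseteq I(F)$ is exactly what makes $\tr_F\psi$ range over $\mathcal B\Lambda_0^{2,2}(F)$. Thus the traces $\tr_F\varphi$ with $I(\varphi)\subseteq I(F)$ are precisely the products $\lambda^\alpha\,(\tr_F\psi)$ with $\supp(\alpha)\subseteq I(F)$ and $\tr_F\psi\in\mathcal B\Lambda_0^{2,2}(F)$, which is the first displayed set. Conversely, every element of the first set arises this way, so the two coincide.

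I do not expect any real obstacle here: the corollary is a direct specialization of Proposition~\ref{prop:basis_polynomial} combined with the trace bookkeeping already set up in Corollary~\ref{cor:trace_basis_face} and Notation~\ref{not:Ipoly}. The one point requiring a moment's care is the observation that the index condition $I(\varphi)\subseteq I(F)$ cleanly separates into the monomial condition $\supp(\alpha)\subseteq I(F)$ and the shape-function condition $I(\psi)\subseteq I(F)$, since $I(\varphi)$ is defined as the union of the two; once that is noted, both equalities follow immediately.
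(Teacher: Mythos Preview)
Your proposal is correct and matches the paper's approach exactly: the paper states this corollary without proof, presenting it as an immediate application of Proposition~\ref{prop:basis_polynomial} to the face $F$, and your argument simply fills in the details of that application together with the trace bookkeeping from Corollary~\ref{cor:trace_basis_face} and Notation~\ref{not:Ipoly}.
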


We now prove a relationship between the vertices associated to a basis element $\varphi$ and its traces to the faces of $T^n$.

\begin{proposition}\label{prop:4_trace-vanish} 
Let $\varphi\in\mathcal B\mathcal P_r\Lambda_{0}^{2,2}(T^n)$, and let $F$ be a face of $T^n$. Then the trace of $\varphi$ to $F$ is nonzero if and only if
\[
I(\varphi)\subseteq I(F).
\]
\end{proposition}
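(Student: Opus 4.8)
The plan is to prove both directions separately, since the statement is an "if and only if."

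For the forward direction, I would prove the contrapositive: if $I(\varphi)\not\subseteq I(F)$, then $\tr_F\varphi=0$. Write $\varphi=\lambda^\alpha\psi$ with $\psi\in\mathcal B\Lambda_0^{2,2}(T^n)$, so that $I(\varphi)=\supp(\alpha)\cup I(\psi)$. If some index $m\in I(\varphi)$ does not belong to $I(F)$, then the vertex $m$ is not a vertex of $F$, so $\lambda_m=0$ on $F$ and hence $d\lambda_m=0$ on $F$. I would split into two cases. If $m\in\supp(\alpha)$, then the monomial $\lambda^\alpha$ contains a positive power of $\lambda_m$, which vanishes on $F$, so $\tr_F\varphi=0$. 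If instead $m\in I(\psi)$, then by Lemma~\ref{lemma:vanishing_faces} the constant-coefficient form $\psi$ (being a $\beta$- or $\gamma$-form whose index set contains $m$) vanishes on any face not containing vertex $m$, in particular on $F$; since trace is multiplicative with respect to the polynomial factor, $\tr_F\varphi=(\tr_F\lambda^\alpha)(\tr_F\psi)=0$.

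For the reverse direction, suppose $I(\varphi)\subseteq I(F)$; I must show $\tr_F\varphi\neq 0$. Here $\supp(\alpha)\subseteq I(F)$, so $\lambda^\alpha$ restricts to a nonzero monomial on $F$ (every barycentric coordinate appearing in it is a nontrivial coordinate on $F$). Likewise $I(\psi)\subseteq I(F)$, so the relevant vertices all lie in $F$, and by Corollary~\ref{cor:trace_basis_face} the trace $\tr_F\psi$ is exactly the corresponding basis element of $\mathcal B\Lambda_0^{2,2}(F)$, hence nonzero. The product of a nonzero monomial and a nonzero constant-coefficient double form is a nonzero element of $\mathcal P_r\Lambda_0^{2,2}(F)$, so $\tr_F\varphi\neq 0$.

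I expect the main obstacle to be the reverse direction, specifically the claim that $\tr_F\psi\neq 0$ and that multiplying it by the surviving monomial cannot produce cancellation. The cleanest way to handle this is to invoke the already-established fact (Corollary~\ref{cor:trace_basis_face}, together with Lemma~\ref{lemma:2d_mathring} and Lemma~\ref{lemma:gamma_independence}) that each $\beta$- and $\gamma$-form restricts to a genuine, nonzero basis element on any face containing all of its vertices, rather than recomputing traces from scratch. Since a monomial is scalar-valued and the double form is nonzero, their product is nonzero pointwise wherever the monomial is nonzero, and the monomial $\lambda^\alpha$ is not identically zero on $F$ precisely because $\supp(\alpha)\subseteq I(F)$; this rules out any accidental vanishing of the trace.
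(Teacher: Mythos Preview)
Your proposal is correct and follows essentially the same approach as the paper's proof: both argue the contrapositive of the forward direction by splitting on whether the missing vertex lies in $\supp(\alpha)$ or in $I(\psi)$ (invoking Lemma~\ref{lemma:vanishing_faces} in the latter case), and both argue the reverse direction by observing that $\lambda^\alpha$ and $\psi$ each have nonzero trace to $F$. Your treatment of the reverse direction is in fact more explicit than the paper's---you invoke Corollary~\ref{cor:trace_basis_face} to justify $\tr_F\psi\neq 0$ and spell out why the product of a nonzero monomial with a nonzero constant-coefficient double form cannot vanish, whereas the paper simply asserts these facts.
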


\begin{proof}
Let $\varphi=\lambda^\alpha\psi$, with $\alpha\in\mathbb N_0^{n+1}$, $\abs\alpha=r$, and $\psi\in\mathcal B\Lambda_{0}^{2,2}(T^n)$.

Assume $I(\varphi)\not\subseteq I(F)$.  
Then there is an index $i$ in $I(\varphi)=I(\lambda^\alpha)\cup I(\psi)$ but not in $I(F)$.  
If $i\in I(\lambda^\alpha)$, then $\lambda_i=0$ on $F$, so $\lambda^\alpha=0$ on $F$ and hence $\varphi=0$.  
If $i\in I(\psi)$, then $\psi$ vanishes on $F$ by Lemma ~\ref{lemma:vanishing_faces}, so again $\varphi=0$.  

Conversely, if $I(\varphi)=I(\lambda^\alpha)\cup I(\psi)\subseteq I(F)$, then both $\lambda^\alpha$ and $\psi$ have nonzero trace to $F$, and therefore $\varphi$ has nonzero trace to $F$ as well.
\end{proof}

\begin{corollary}\label{cor:trace-vanish-boundary}
Let $\varphi\in \mathcal B{\mathcal P}_r \Lambda_{0}^{2,2}(T^{n})$.  
Then $\varphi$ vanishes on the boundary of $T^n$ if and only if
\[
I(\varphi)=\{0,1,\dots,n\}.
\]
\end{corollary}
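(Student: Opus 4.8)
The plan is to deduce this corollary directly from Proposition~\ref{prop:4_trace-vanish} by unpacking what it means to vanish on the entire boundary. The boundary of $T^n$ is the union of its proper faces, and the codimension-one faces (facets) are exactly the faces $F_m$ obtained by omitting a single vertex $m$, so $I(F_m)=\{0,\dots,n\}\setminus\{m\}$. A double form vanishes on the boundary if and only if it vanishes on each of these facets, since every proper face is contained in some facet and the trace to a subface factors through the trace to any face containing it. Thus $\varphi$ vanishes on $\partial T^n$ precisely when $\tr_{F_m}\varphi=0$ for all $m\in\{0,\dots,n\}$.

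First I would apply Proposition~\ref{prop:4_trace-vanish} to each facet $F_m$: the trace $\tr_{F_m}\varphi$ is nonzero if and only if $I(\varphi)\subseteq I(F_m)$, equivalently if and only if $m\notin I(\varphi)$. Taking contrapositives, $\tr_{F_m}\varphi=0$ if and only if $m\in I(\varphi)$. Therefore $\varphi$ vanishes on every facet if and only if every index $m\in\{0,\dots,n\}$ lies in $I(\varphi)$, which says exactly that $I(\varphi)=\{0,1,\dots,n\}$ (using that $I(\varphi)\subseteq\{0,\dots,n\}$ always holds).

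The only point requiring a word of care is the reduction from ``vanishes on all of $\partial T^n$'' to ``vanishes on every facet.'' I would justify this by noting that every proper face $G$ of $T^n$ satisfies $G\subseteq F_m$ for some facet $F_m$ (any vertex not in $G$ gives such an $m$), and that the trace is functorial under inclusion of faces, so $\tr_G\varphi=\tr_G(\tr_{F_m}\varphi)$ and vanishing on $F_m$ forces vanishing on $G$. Hence vanishing on all facets is equivalent to vanishing on the whole boundary. I do not anticipate a genuine obstacle here; the corollary is a clean logical consequence of Proposition~\ref{prop:4_trace-vanish}, and the main task is simply to phrase the facet-by-facet argument cleanly and record the functoriality of traces.
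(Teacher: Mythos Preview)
Your argument is correct and is essentially the same as the paper's: both directions are immediate applications of Proposition~\ref{prop:4_trace-vanish} to codimension-one faces, and your extra sentence on functoriality of traces just rephrases what the paper does by checking every proper face directly in the converse direction.
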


\begin{proof}
If $I(\varphi)\neq\{0,\dots,n\}$, then let $i$ be an index not in $I(\varphi)$.  
Let $F$ be the face opposite vertex $i$, so $I(F)=\{0,\dots,n\}\setminus\{i\}$. Then $I(\varphi)\subseteq I(F)$, so Proposition~\ref{prop:4_trace-vanish} implies that $\varphi$ has nonzero trace to $F$. Hence $\varphi$ does not vanish on the boundary of $T^n$.

Conversely, if $I(\varphi)=\{0,\dots,n\}$, then for any proper face $F$ of $T^n$ we have
$I(\varphi)\nsubseteq I(F)$.
By Proposition~\ref{prop:4_trace-vanish}, it follows that $\tr_F\varphi=0$.
We conclude that $\varphi$ vanishes on the boundary of $T^n$.
\end{proof}

\begin{definition}\label{def:mathring}
Let $\mathring{\mathcal P}_r\Lambda^{2,2}_0(T^n)$ be the subset of $\mathcal P_r\Lambda^{2,2}_0(T^n)$ consisting of double forms with vanishing trace on the boundary of $T^n$.
\end{definition}

\begin{proposition}\label{prop:basis_mathring}
Let $r\ge 0$. Then
\[
\mathcal B \mathring{\mathcal P}_r \Lambda_{0}^{2,2}(T^{n})
:=\bigl\{
\varphi\in\mathcal B\mathcal P_r\Lambda_{0}^{2,2}(T^{n})
:I(\varphi)=\{0,\dots,n\}\bigr\}
\]
is a basis of $\mathring{\mathcal P}_r \Lambda_{0}^{2,2}(T^{n})$.
\end{proposition}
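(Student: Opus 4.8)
The plan is to verify the three defining properties of a basis: that $\mathcal B \mathring{\mathcal P}_r \Lambda_0^{2,2}(T^n)$ is linearly independent, that it is contained in $\mathring{\mathcal P}_r \Lambda_0^{2,2}(T^n)$, and that it spans this subspace. The first two properties are immediate, and the spanning statement is where the real content lies. For linear independence, observe that by definition $\mathcal B \mathring{\mathcal P}_r \Lambda_0^{2,2}(T^n)$ is a subset of $\mathcal B \mathcal P_r \Lambda_0^{2,2}(T^n)$, which is a basis of $\mathcal P_r \Lambda_0^{2,2}(T^n)$ by Proposition~\ref{prop:basis_polynomial} and hence linearly independent; any subset of a linearly independent set is linearly independent. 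For containment, note that every $\varphi$ in the set satisfies $I(\varphi)=\{0,\dots,n\}$, so Corollary~\ref{cor:trace-vanish-boundary} guarantees that $\varphi$ vanishes on the boundary of $T^n$, i.e.\ $\varphi\in\mathring{\mathcal P}_r \Lambda_0^{2,2}(T^n)$.

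For spanning, I would take an arbitrary $\eta\in\mathring{\mathcal P}_r \Lambda_0^{2,2}(T^n)$ and expand it in the full basis, writing $\eta=\sum_{\varphi} c_\varphi\varphi$ over $\varphi\in\mathcal B\mathcal P_r\Lambda_0^{2,2}(T^n)$; the goal is to show $c_\varphi=0$ whenever $I(\varphi)\neq\{0,\dots,n\}$. I would argue one facet at a time. For each vertex $i\in\{0,\dots,n\}$, let $F_i$ be the opposite facet, so that $I(F_i)=\{0,\dots,n\}\setminus\{i\}$. Since $\eta$ vanishes on the boundary, $\tr_{F_i}\eta=0$, and by Proposition~\ref{prop:4_trace-vanish} the only surviving terms under this trace are those with $I(\varphi)\subseteq I(F_i)$, equivalently $i\notin I(\varphi)$. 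This yields $\sum_{i\notin I(\varphi)} c_\varphi\,\tr_{F_i}\varphi=0$. By Corollary~\ref{cor:basis_face_polynomial}, the collection of surviving traces $\{\tr_{F_i}\varphi : I(\varphi)\subseteq I(F_i)\}$ is exactly the basis $\mathcal B\mathcal P_r\Lambda_0^{2,2}(F_i)$ of $\mathcal P_r\Lambda_0^{2,2}(F_i)$, hence linearly independent, which forces $c_\varphi=0$ for every $\varphi$ with $i\notin I(\varphi)$. Ranging over all vertices $i$, any $\varphi$ with $I(\varphi)\neq\{0,\dots,n\}$ has some vertex missing from $I(\varphi)$ and is therefore killed, so $\eta=\sum_{I(\varphi)=\{0,\dots,n\}} c_\varphi\varphi$ lies in $\Span\mathcal B \mathring{\mathcal P}_r \Lambda_0^{2,2}(T^n)$.

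The step requiring the most care is the spanning argument, and within it the assertion that the nonzero facet traces are linearly independent. This is precisely where Corollary~\ref{cor:basis_face_polynomial} carries the load: it identifies $\{\tr_{F_i}\varphi : I(\varphi)\subseteq I(F_i)\}$ with the previously constructed basis of $\mathcal P_r\Lambda_0^{2,2}(F_i)$, so the independence is inherited and no new computation is needed. I do not expect a genuine obstacle beyond bookkeeping; the one subtlety to keep in mind is that the trace map is injective on $\{\varphi : I(\varphi)\subseteq I(F_i)\}$, which is implicit in Corollary~\ref{cor:basis_face_polynomial} through the factorization $\tr_{F_i}(\lambda^\alpha\psi)=\lambda^\alpha\,\tr_{F_i}\psi$ together with Corollary~\ref{cor:trace_basis_face}.
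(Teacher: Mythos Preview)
Your proposal is correct and follows essentially the same approach as the paper's proof: both establish containment and linear independence immediately via Corollary~\ref{cor:trace-vanish-boundary} and Proposition~\ref{prop:basis_polynomial}, then prove spanning by expanding an arbitrary trace-free element in the full basis, restricting to each facet $F_i$ opposite vertex $i$, and invoking Corollary~\ref{cor:basis_face_polynomial} to conclude that all coefficients $c_\varphi$ with $i\notin I(\varphi)$ vanish. Your additional remark about injectivity of the trace map on $\{\varphi : I(\varphi)\subseteq I(F_i)\}$ is a nice clarification of what Corollary~\ref{cor:basis_face_polynomial} is implicitly asserting.
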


\begin{proof}
Elements of $\mathcal B\mathring{\mathcal P}_r\Lambda^{2,2}_0(T^n)$ are in $\mathring{\mathcal P}_r\Lambda^{2,2}_0(T^n)$ by Corollary~\ref{cor:trace-vanish-boundary}, and linear independence follows from the fact that $\mathcal B\mathring{\mathcal P}_r\Lambda^{2,2}_0(T^n)$ is contained in the basis $\mathcal B\mathcal P_r\Lambda_0^{2,2}(T^n)$. It remains to show that $\mathcal B\mathring{\mathcal P}_r\Lambda^{2,2}_0(T^n)$ spans $\mathring{\mathcal P}_r\Lambda^{2,2}_0(T^n)$.

 Let $\sigma\in\mathring{\mathcal P}_r\Lambda_{0}^{2,2}(T^n)$. We can expand $\sigma$ in the basis of $\mathcal P_r\Lambda_{0}^{2,2}(T^n)$ as
\[
\sigma=\sum_{\varphi\in\mathcal B\mathcal P_r\Lambda_{0}^{2,2}(T^n)} c_{\varphi}\,\varphi,
\qquad \text{for } c_\varphi\in\mathbb R.
\]
Let $i\in\{0,\dots,n\}$ and let $F$ be the face opposite vertex $i$, so $I(F)=\{0,\dots,n\}\setminus\{i\}$. We will now take the trace of above equation to $F$.
Since $\sigma$ has vanishing trace, we have $\tr_F\sigma=0$. Using Proposition~\ref{prop:4_trace-vanish},
only those basis terms with $I(\varphi)\subseteq I(F)$ have nonzero trace to $F$, hence
\[
0=\tr_F\sigma
=\sum_{\substack{\varphi\in\mathcal B\mathcal P_r\Lambda_{0}^{2,2}(T^n)\\ I(\varphi)\subseteq I(F)}}
c_\varphi\,\tr_F\varphi,
\]
By Corollary~\ref{cor:basis_face_polynomial},
the set $\{\tr_F\varphi:\varphi\in\mathcal B\mathcal P_r\Lambda_{0}^{2,2}(T^n), I(\varphi)\subseteq I(F)\}$ is a basis of
$\mathcal P_r\Lambda_{0}^{2,2}(F)$, so all of the corresponding coefficients 
$c_\varphi$ are zero. In other words, we showed that $c_\varphi$ is zero whenever $I(\varphi)$ does not contain $i$.

Repeating this for each $i\in\{0,\dots,n\}$ eliminates every term except those with
$I(\varphi)=\{0,\dots,n\}$. Therefore $\sigma$ is a linear combination of the proposed basis elements in $\mathcal B \mathring{\mathcal P}_r \Lambda_{0}^{2,2}(T^{n})$, and so we conclude that $\mathcal B \mathring{\mathcal P}_r \Lambda_{0}^{2,2}(T^{n})$ spans $\mathring{\mathcal P}_r\Lambda_{0}^{2,2}(T^n)$, as desired.
\end{proof}

We now determine the dimension of $\mathring{\mathcal P}_r\Lambda_{0}^{2,2}(T^n)$.

\begin{proposition}\label{prop:counting_mathring}
Let $r \ge 0$. The set $\mathcal B \mathring{\mathcal P}_r \Lambda_{0}^{2,2}(T^{n})$ has
\[
\binom{n+1}{3}\binom{r+2}{n}
+
2\binom{n+1}{4}\binom{r+3}{n}
\]
elements. In particular, for $r < n - 3$, this set has zero elements.
\end{proposition}

\begin{proof}
Every basis element in $\mathcal B \mathring{\mathcal P}_r \Lambda_{0}^{2,2}(T^{n})$ can be written as $
\varphi = \lambda^\alpha \psi$, where $
\abs\alpha = r$, $\psi \in \mathcal B\Lambda^{2,2}_0(T^n)$,
and every index in $\{0,\dots,n\}$ appears either in $\psi$ or in $\lambda^\alpha$. 

Suppose first that $\psi = \beta_{ijk}$ with $i<j<k$. Then for any $l\notin\{i,j,k\}$, we have that $\lambda_l$ appears with exponent at least one in $\lambda^\alpha$. Therefore we can write $\lambda^\alpha=\mu\lambda^{\alpha'}$ where $\mu$ is the product of the $\lambda_l$ for $\lambda\in\{0,\dotsc,n\}\setminus\{i,j,k\}$. In particular, $\deg \mu = n - 2$. The number of possible $\lambda^{\alpha'}$ is given by the number of monomials of degree $r - (n - 2)$ in $n+1$ variables, which is $\binom{r + 2}{n}$.
Meanwhile, there are $\binom{n+1}{3}$ choices of $\psi$ of the form $\beta_{ijk}$ corresponding to the choice of the three indices $\{i,j,k\}$. So, in total, there are $\binom{n+1}{3}\binom{r + 2}{n}$ choices for $\psi$ of this form.

When $\psi=\gamma_{ikjl}$ or $\psi=\gamma_{iljk}$, the argument is similar. In this case $\mu$ has degree $n - 3$, so we obtain $\binom{r + 3}{n}$ possible $\lambda^{\alpha'}$ for each choice of $\psi$. There are  $2\binom{n+1}{4}$ choices of $\psi$ because there are two choices of $\psi$ for every $\binom{n+1}{4}$ choices of the indices $\{i,j,k,l\}$. So, in total, there are $2\binom{n+1}{4}\binom{r + 3}{n}$ choices for $\psi$ of this form.

Putting these together, the statement follows.
\end{proof}

We now state the claims analogous to Propositions~\ref{prop:basis_mathring} and \ref{prop:counting_mathring} for faces $F$ of $T^n$.

\begin{corollary}
\label{cor:basis_mathring_face}
Let $F$ be a face of $T^n$ of dimension $m$. 
Then the set
\[
\mathcal B\mathring{\mathcal P}_r\Lambda_{0}^{2,2}(F):=\{\varphi : \varphi\in\mathcal B\mathcal P_r\Lambda_0^{2,2}(F), I(\varphi)=I(F) \}
\]
is a basis of $\mathring{\mathcal P}_r\Lambda_0^{2,2}(F)$. Moreover, \[
\abs{\mathcal B\mathring{\mathcal P}_r\Lambda_{0}^{2,2}(F)}
=
\binom{m+1}{3}\binom{r+2}{m}
+
2\binom{m+1}{4}\binom{r+3}{m}.
\]
\end{corollary}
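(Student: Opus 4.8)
The plan is to observe that this corollary is simply Propositions~\ref{prop:basis_mathring} and \ref{prop:counting_mathring} applied verbatim to the face $F$, viewed as a simplex in its own right. The key point is that $F$, being an $m$-dimensional face of $T^n$, is itself isometric (in the combinatorial/affine sense) to the standard simplex $T^m$, with barycentric coordinates given by the restrictions $\{\lambda_i : i\in I(F)\}$. Everything we have built---the spaces $\Lambda_0^{2,2}(F)$, the basis $\mathcal B\Lambda_0^{2,2}(F)$ from Corollary~\ref{cor:basis_face_constant}, the polynomial basis $\mathcal B\mathcal P_r\Lambda_0^{2,2}(F)$ from Corollary~\ref{cor:basis_face_polynomial}, and the notion of vanishing trace on $\partial F$---is intrinsic to $F$ and does not reference the ambient simplex $T^n$. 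Thus I would apply each earlier result with $T^n$ replaced by $F\cong T^m$ and $\{0,\dots,n\}$ replaced by $I(F)$.

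First I would establish the basis claim. The relevant analogue of ``$I(\varphi)=\{0,\dots,n\}$'' is ``$I(\varphi)=I(F)$,'' since $I(F)$ plays the role of the full index set for the simplex $F$. By Corollary~\ref{cor:basis_face_polynomial}, $\mathcal B\mathcal P_r\Lambda_0^{2,2}(F)$ is a basis of $\mathcal P_r\Lambda_0^{2,2}(F)$, and by Proposition~\ref{prop:4_trace-vanish} applied intrinsically to $F$, a basis element $\varphi\in\mathcal B\mathcal P_r\Lambda_0^{2,2}(F)$ has vanishing trace on every proper subface of $F$---equivalently, on $\partial F$---exactly when $I(\varphi)=I(F)$ (this is the intrinsic version of Corollary~\ref{cor:trace-vanish-boundary}). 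Running the spanning argument of Proposition~\ref{prop:basis_mathring} with $F$ in place of $T^n$ then shows that the indicated subset of basis elements spans $\mathring{\mathcal P}_r\Lambda_0^{2,2}(F)$, and linear independence is inherited from containment in the basis $\mathcal B\mathcal P_r\Lambda_0^{2,2}(F)$.

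Next I would establish the cardinality formula by reading off Proposition~\ref{prop:counting_mathring} with $n$ replaced by $m$. Since $F$ has $m+1$ vertices, the count of $\beta$-type contributions is $\binom{m+1}{3}\binom{r+2}{m}$ and of $\gamma$-type contributions is $2\binom{m+1}{4}\binom{r+3}{m}$, with the degree bookkeeping ($\mu$ of degree $m-2$ or $m-3$ absorbing the indices of $I(F)$ absent from the form) carried out exactly as before but over the $m+1$ variables indexed by $I(F)$.

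The only point requiring genuine care---and what I expect to be the main (though modest) obstacle---is justifying that all of the earlier machinery transfers intrinsically to $F$. Specifically, one must check that ``vanishing trace on $\partial T^n$'' for a subface coincides with ``vanishing trace on $\partial F$'' when everything is restricted to $F$, and that the indices $I(\varphi)$ and the support conditions behave correctly under this restriction. This is exactly the content already encoded in Corollaries~\ref{cor:basis_face_constant} and \ref{cor:basis_face_polynomial}, which were stated precisely to make $F$ self-contained as a simplex; so the verification amounts to confirming that substituting $I(F)$ for $\{0,\dots,n\}$ leaves every hypothesis intact, after which the conclusions follow formally.
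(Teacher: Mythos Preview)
Your proposal is correct and matches the paper's approach: the paper states this corollary immediately after Propositions~\ref{prop:basis_mathring} and~\ref{prop:counting_mathring} without a separate proof, treating it as the direct application of those results to $F\cong T^m$. Your more careful discussion of why the intrinsic transfer to $F$ is legitimate (via Corollaries~\ref{cor:basis_face_constant} and~\ref{cor:basis_face_polynomial}) simply makes explicit what the paper leaves implicit.
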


We are now ready to discuss the geometric decomposition of $\mathcal P_r\Lambda^{2,2}_0(T^n)$. We begin by discussing the extension operators that naturally arise from the formulas for $\beta_{ijk}$ and $\gamma_{ijkl}$.

\begin{definition}\label{def:extension}
Let $F$ be a face of $ T^n$. We define the extension operator
\[
E_F:\mathring{\mathcal P}_r\Lambda_{0}^{2,2}(F)\to\mathcal P_r\Lambda_{0}^{2,2}(T^n)
\]
on basis elements by
\[
E_F(\lambda^\alpha\beta_{ijk})=\lambda^\alpha\beta_{ijk},\quad \text{and} \quad
E_F(\lambda^\alpha\gamma_{ijkl})=\lambda^\alpha\gamma_{ijkl}.
\]
where the forms on the left-hand side are viewed as forms on $F$, and those on the right are viewed as forms on $T^n$.

\end{definition}

\begin{theorem}\label{thm:extension_direct_sum}
For $r\ge0$, the basis $\mathcal B \mathcal P_r\Lambda_0^{2,2}(T^n)$ decomposes as the disjoint union
\[
\mathcal B\mathcal P_r\Lambda_{0}^{2,2}(T^n)
=\bigsqcup_{F\subseteq T^n}E_F\bigl(\mathcal B\mathring{\mathcal P}_r\Lambda_{0}^{2,2}(F)\bigr),
\]
 where $\mathcal B\mathcal P_r\Lambda_0^{2,2}(T^n)$ is the basis from Proposition~\ref{prop:basis_polynomial}, and $\mathcal B\mathring{\mathcal P}_r\Lambda_{0}^{2,2}(F)$ is the basis of the vanishing trace space from Corollary~\ref{cor:basis_mathring_face}. It follows that
\[
\mathcal P_r\Lambda_{0}^{2,2}(T^n)
=\bigoplus_{F\subseteq T^n}E_F\bigl(\mathring{\mathcal P}_r\Lambda_{0}^{2,2}(F)\bigr).
\]
\end{theorem}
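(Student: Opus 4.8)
The plan is to run a bookkeeping argument organized entirely around the index set $I(\varphi)$ attached to each basis element in Notation~\ref{not:Ipoly}. First I would observe that every basis element $\varphi=\lambda^\alpha\psi\in\mathcal B\mathcal P_r\Lambda_0^{2,2}(T^n)$ carries a well-defined index set $I(\varphi)=\supp(\alpha)\cup I(\psi)\subseteq\{0,\dots,n\}$, which corresponds to a unique face $F$ of $T^n$ with $I(F)=I(\varphi)$. Sending $\varphi$ to this face therefore partitions $\mathcal B\mathcal P_r\Lambda_0^{2,2}(T^n)$ into blocks indexed by the faces $F\subseteq T^n$, and the content of the theorem is that the block attached to $F$ is exactly $E_F\bigl(\mathcal B\mathring{\mathcal P}_r\Lambda_0^{2,2}(F)\bigr)$.

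The central step is to prove the set equality
\[
E_F\bigl(\mathcal B\mathring{\mathcal P}_r\Lambda_0^{2,2}(F)\bigr)
=\bigl\{\varphi\in\mathcal B\mathcal P_r\Lambda_0^{2,2}(T^n):I(\varphi)=I(F)\bigr\}
\]
for each fixed face $F$. Unwinding Corollary~\ref{cor:basis_mathring_face}, the elements of $\mathcal B\mathring{\mathcal P}_r\Lambda_0^{2,2}(F)$ are precisely the expressions $\lambda^\alpha\psi$, viewed on $F$, with $\psi\in\mathcal B\Lambda_0^{2,2}(F)$, $\abs\alpha=r$, $\supp(\alpha)\subseteq I(F)$, and $\supp(\alpha)\cup I(\psi)=I(F)$. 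By Definition~\ref{def:extension}, applying $E_F$ simply reinterprets the same expression $\lambda^\alpha\psi$ as a form on $T^n$. The two constraints $\supp(\alpha)\subseteq I(F)$ and $I(\psi)\subseteq I(F)$ (the latter because $\psi\in\mathcal B\Lambda_0^{2,2}(F)$ via Corollary~\ref{cor:basis_face_constant}) guarantee that this reinterpretation is a genuine element of $\mathcal B\mathcal P_r\Lambda_0^{2,2}(T^n)$, and its index set is $\supp(\alpha)\cup I(\psi)=I(F)$. Conversely, any $\varphi=\lambda^\alpha\psi\in\mathcal B\mathcal P_r\Lambda_0^{2,2}(T^n)$ with $I(\varphi)=I(F)$ satisfies exactly these constraints and hence arises this way, which yields the claimed equality.

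With this identification in hand, the disjoint union follows immediately: distinct faces have distinct index sets, so the blocks $E_F\bigl(\mathcal B\mathring{\mathcal P}_r\Lambda_0^{2,2}(F)\bigr)$ are pairwise disjoint, and every $\varphi$ lies in precisely the block of the unique face with $I(F)=I(\varphi)$. For the direct sum of spaces, I would note that $E_F$ is injective, since it sends the distinct basis expressions in $\mathcal B\mathring{\mathcal P}_r\Lambda_0^{2,2}(F)$ to distinct basis expressions in $\mathcal B\mathcal P_r\Lambda_0^{2,2}(T^n)$; hence $E_F\bigl(\mathcal B\mathring{\mathcal P}_r\Lambda_0^{2,2}(F)\bigr)$ is a basis of the subspace $E_F\bigl(\mathring{\mathcal P}_r\Lambda_0^{2,2}(F)\bigr)$. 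Since $\mathcal B\mathcal P_r\Lambda_0^{2,2}(T^n)$ is a basis of the ambient space by Proposition~\ref{prop:basis_polynomial} and partitions into these sub-bases, standard linear algebra gives $\mathcal P_r\Lambda_0^{2,2}(T^n)=\bigoplus_{F\subseteq T^n}E_F\bigl(\mathring{\mathcal P}_r\Lambda_0^{2,2}(F)\bigr)$.

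I expect the only step needing genuine care to be the set equality in the second paragraph, specifically matching the face-level description of Corollary~\ref{cor:basis_mathring_face} against the whole-simplex basis and confirming that the ``same formula'' interpretation underlying $E_F$ respects the index constraints (that a $\psi$ on $F$ corresponds, via Corollary~\ref{cor:basis_face_constant}, to a $\psi$ on $T^n$ with $I(\psi)\subseteq I(F)$). Everything downstream is routine index-set bookkeeping and elementary linear algebra.
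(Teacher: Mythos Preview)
Your proposal is correct and follows essentially the same approach as the paper: both arguments hinge on the identity $E_F\bigl(\mathcal B\mathring{\mathcal P}_r\Lambda_0^{2,2}(F)\bigr)=\{\varphi\in\mathcal B\mathcal P_r\Lambda_0^{2,2}(T^n):I(\varphi)=I(F)\}$ and then observe that each basis element corresponds to a unique face via its index set. Your write-up is simply more explicit about unpacking Corollary~\ref{cor:basis_mathring_face} and Definition~\ref{def:extension} in both directions, and about the injectivity of $E_F$ needed for the direct-sum statement, whereas the paper dispatches these in a sentence.
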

\begin{proof}
By Corollary~\ref{cor:basis_mathring_face},  $\mathcal B\mathring{\mathcal P}_r\Lambda_{0}^{2,2}(F)$ consists of the set of $\varphi$ with $I(\varphi)=I(F)$. By Definition~\ref{def:extension}, we then have
\[
E_F\bigl(\mathcal B\mathring{\mathcal P}_r\Lambda_{0}^{2,2}(F)\bigr)
=\{\varphi\in\mathcal B\mathcal P_r\Lambda_{0}^{2,2}(T^n):I(\varphi)=I(F)\}.
\]
Since every $\varphi\in\mathcal{BP}_r\Lambda^{2,2}_0(T^n)$ satisfies $I(\varphi)=I(F)$ for exactly one $F$, we see that the subsets $E_F\bigl(\mathcal B\mathring{\mathcal P}_r\Lambda_{0}^{2,2}(F)\bigr)$ are disjoint and their union is all of $\mathcal B\mathcal P_r\Lambda_{0}^{2,2}(T^n)$. 
\end{proof}

So far, we have worked on a single simplex, but, following \cite{be2025extension}, we can define a global space $\mathcal P_r\Lambda^{2,2}_0(\mathcal T)$ on a triangulation $\mathcal T$, where the interelement continuity conditions are that the traces (in the sense of Definition~\ref{def:simplex}) to an interface from both sides match. Then, using \cite{be2025extension}, the analogous geometric decomposition result follows from our definition of extension operators and Proposition~\ref{prop:4_trace-vanish}.

\section{Conclusion and Future Work}

We have developed a geometrically decomposed basis for the finite element space $\mathcal P_r\Lambda^{2,2}_0(\mathcal T)$ of double two-forms with polynomial coefficients satisfying the Bianchi identity, analogous to Li's work \cite{li2018regge} on the space $\mathcal P_r\Lambda^{1,1}_0(\mathcal T)$ of symmetric bilinear forms. The space $\mathcal P_r\Lambda^{2,2}_0(\mathcal T)$ has $\binom{m+1}{3}\binom{r+2}{m}+2\binom{m+1}{4}\binom{r+3}{m}$ degrees of freedom corresponding to each face of dimension $m$, where $\binom{m+1}{3}\binom{r+2}{m}$ of them correspond to shape functions that are monomial multiples of the $\beta_{ijk}$, and $2\binom{m+1}{4}\binom{r+3}{m}$ of them correspond to shape functions that are monomial multiples of the $\gamma_{ijkl}$. The fact that there are shape functions of two different types is the key difference between our work on double two-forms and Li's work on double one-forms.

Going forward, one could generalize our work further to finite element spaces $\mathcal P_r\Lambda^{p,q}_0(\mathcal T)$ of arbitrary double forms with polynomial coefficients. With constant coefficients $r=0$, these spaces were already explored in detail in \cite{bega25double}, where one finds that, in general, there are $\min\{p,q\}$ types of double forms to consider. (As discussed in \cite{bega25double}, in general, there are many natural subspaces of double forms, which are denoted there by $\Lambda^{p,q}_m$, but it suffices to consider the $m=0$ case because $\Lambda^{p,q}_m\cong\Lambda^{p+m,q-m}_0$.) In our work, for each specific type ($\beta_{ijk}$ or $\gamma_{ijkl}$) of double form in the basis for $\Lambda^{2,2}_0(\mathcal T)$, the construction of the geometrically decomposed basis worked much the same way as in Li \cite{li2018regge}, and we expect the same techniques to work in general by dealing with each of the $\min\{p,q\}$ types of double forms in the basis for $\Lambda^{p,q}_0(\mathcal T)$ individually.

\section*{Acknowledgements}
This work was supported by NSF award DMS-2411209. We would also like to thank Evan Gawlik for helpful feedback.

\bibliographystyle{plain}
\bibliography{refs}

\end{document}